\numberwithin{equation}{section}
\begin{document}

	\newcommand{\s}{\sigma}
	\renewcommand{\k}{\kappa}
	\newcommand{\p}{\partial}
	\newcommand{\D}{\Delta}
	\newcommand{\om}{\omega}
	\newcommand{\Om}{\Omega}
	\renewcommand{\phi}{\varphi}
	\newcommand{\e}{\epsilon}
	\renewcommand{\a}{\alpha}
	\renewcommand{\b}{\beta}
	\newcommand{\N}{{\mathbb N}}
	\newcommand{\R}{{\mathbb R}}
	\newcommand{\eps}{\varepsilon}
	\newcommand{\EX}{{\mathbb{E}}}
	\newcommand{\PX}{{\mathbb{P}}}
	
	\newcommand{\cF}{{\cal F}}
	\newcommand{\cG}{{\cal G}}
	\newcommand{\cD}{{\cal D}}
	\newcommand{\cO}{{\cal O}}
	
	\newcommand{\de}{\delta}

	\newcommand{\grad}{\nabla}
	\newcommand{\n}{\nabla}
	\newcommand{\curl}{\nabla \times}
	\newcommand{\dive}{\nabla \cdot}
	
	\newcommand{\ddt}{\frac{d}{dt}}
	\newcommand{\la}{{\lambda}}
	
	\newcommand{\dif}{\mathop{ }\!\mathrm{d}}

	\newtheorem{theorem}{Theorem}[section]
	\newtheorem{lemma}{Lemma}[section]
	\newtheorem{remark}{Remark}[section]
	\newtheorem{example}{Example}[section]
	\newtheorem{definition}{Definition}[section]
	\newtheorem{corollary}{Corollary}[section]
	\newtheorem{assumption}{Assumption}[section]
	\newtheorem{prop}{Proposition}[section]
	\newtheorem{notation}{Notation}[section]
	\def\proof{\mbox {\it Proof.~}}
	\makeatletter
	\@addtoreset{equation}{section}
	\makeatother
	\renewcommand{\theequation}{\arabic{section}.\arabic{equation}}

	\makeatletter\def\theequation{\arabic{section}.\arabic{equation}}\makeatother
	
	\title{ Criteria for asymptotic stability of eventually continuous Markov-Feller semigroups}
	
	\author{
		{ \bf\large Ting Li, Xianming Liu\footnote{Corresponding author: xmliu@hust.edu.cn}}\hspace{2mm}
		\vspace{1mm}\vspace{1mm}\\
		{\it\small  School of Mathematics and Statistics},\\
		{\it\small Huazhong University of Science and Technology},
		{\it\small Wuhan 430074,  China}}

	\date{\today }
	\maketitle
	
	\begin{abstract}
		We establish two criteria for the asymptotic behavior of Markov-Feller semigroups. First, we present a criterion for convergence in total variation to a unique invariant measure, requiring only $TV$-eventual continuity of the semigroup at a single point. Second, we propose new asymptotic stability criteria that satisfy two non-uniform asymptotic conditions and eventual continuity at a single point. Furthermore, we provide an explicit example where eventual continuity at a point admits simple verification, while proving the corresponding global property requires significantly more sophisticated methods.
	\end{abstract}
	
	\noindent {\it \footnotesize Key words}. {\scriptsize
		Markov-Feller semigroups; eventually continuous; asymptotically stable; invariant measures; total variance}
	

	
	\setcounter{secnumdepth}{5} \setcounter{tocdepth}{5}
	
	\makeatletter
	\newcommand\figcaption{\def\@captype{figure}\caption}
	\newcommand\tabcaption{\def\@captype{table}\caption}
	\makeatother

	\section{Introduction}
	Markov processes, crucial research topics in stochastic systems, are widely applied in finance, biology, and engineering due to their memoryless property. A key aspect of their long-term behavior is the convergence of the associated Markov semigroups to a unique invariant measure, which is crucial to understanding the system's stability.
	
	The long-term behavior of Markov semigroups has recently attracted considerable interest, as referenced \cite{Bess, Gong1, Gong2, unique, epro, Laso1, Feller, stability, Tom1}. For diffusion processes, the uniqueness of invariant measures in total variation can typically be established via two distinct approaches: either by applying Harris' theorem (see \cite{harris}) directly or by combining the existence of an invariant measure with Doob's theorem (see \cite{Da}) to derive uniqueness. In the case of SPDEs driven by degenerate Wiener noise, Hairer and Mattingly (see \cite{Hairer-1}) introduced an alternative framework based on the asymptotic strong Feller property---a generalization of the classical strong Feller condition---which ensures uniqueness under weaker regularity assumptions.
	Recent efforts have identified conditions under which the uniqueness of invariant measures for Markov-Feller semigroups can be established.
	For example, Szarek \cite[Theorem 3.3]{Feller} established a sufficient condition for the asymptotic stability of Markov-Feller semigroups under the $e$-property assumption. Szarek and Worm \cite{Tom1} further demonstrated a necessary and sufficient condition that is both necessary and sufficient for the asymptotic stability of Markov-Feller semigroups. Their result is as follows: if $\{P_t\}_{t\geq 0}$ has $e$-property on $E$,
	\begin{equation*}\label{a}
		\forall \epsilon>0, \inf_{x\in E}\liminf_{t\rightarrow\infty} P_t(x, B(z,\epsilon))>0 \ \   \bm{\Leftrightarrow}\ \  \text{asymptotically stable on $E$.} \tag{A}
	\end{equation*}
	The $e$-property is also known as equicontinuity (see \cite{epro}), i.e., for any Lipschitz bounded function $f$, the family of functions $\{P_tf\}_{t\geq 0}$ is uniformly continuous at any point of a Polish space $E$.

	Non-equicontinuous Markov semigroups have attracted research attention. Specifically, the asymptotic stability of Markov-Feller semigroups under eventual continuity (see Definition \ref{def1}) has been investigated in \cite{Gong1, Gong2, Jaroszewska}. Subsequently, Gong and co-authors \cite{Gong3} demonstrated that asymptotic stability can be achieved if Markov-Feller semigroups satisfy eventual continuity at a single point and a uniform lower bound, i.e. 
		\begin{equation}\label{b}
			\left.
			\begin{array}{rr}
				\exists z\in E, \  \{P_t\}_{t\geq 0} \ \text{is} \  \text{eventually continuous at $z$}    \\
				\forall x\in E, \forall \epsilon>0, \inf\limits_{x\in E}\liminf\limits_{t\rightarrow\infty} P_t(x, B(z,\epsilon))>0
			\end{array}
			\right\}
			\bm{\Leftrightarrow} \ \ \text{asymptotically stable on $E$.} \nonumber \tag{B}
	\end{equation}
	There are many instances of Markov semigroups that fail to meet the condition of equicontinuity. For example, see \cite[Example 4.1, Example 4.2]{Gong3}, \cite[Example 7]{Jaroszewska} and \cite[Example 4]{Kulik}. A toy model that can be seen as a special case of \cite[Proposition 5.2]{Gong3} reads
	\begin{align}\label{langevin}
		\dif X_t=(\frac32X_t-X_t^3) \dif t+ X_t\dif B_t,\ \ \  X_0=x\in \mathbb{R}.
	\end{align}
	We observe that \eqref{langevin} does not satisfy the $e$-property at $0$ and is not eventually continuous at $0$. Specifically, $0$ is an unstable equilibrium for \eqref{langevin}.

	Parallel studies are actively exploring another aspect of the long-term behavior of Markov-Feller semigroups. For instance, Komorowski and co-authors \cite{epro} studied the local weak* mean ergodicity of Markov semigroups, specifically defined on the set $\mathcal{T}$  (see \eqref{defT}). Based on this, Szarek and the coauthors \cite{stability} established asymptotic stability on the same set $\mathcal{T}$:
	\begin{equation}\label{c}
		\left.
		\begin{array}{rrr}
			\{P_t\}_{t\geq 0} \ \text{has} \    e\text{-property on $E$}\\
			\exists z\in E, \forall x\in E, \forall \epsilon>0,\liminf\limits_{t\rightarrow \infty} Q_t(x, B(z,\epsilon))>0\\
			\liminf\limits_{t\rightarrow\infty} P_t(z, B(z,r))>0
		\end{array}
		\right\}
		\bm{\Rightarrow}\ \  \text{asymptotically stable on $\mathcal{T}$ .} \tag{C}
	\end{equation}
	
	In this paper, we investigate the necessary and sufficient conditions for the ergodicity of Markov-Feller semigroups under more general conditions, particularly focusing on local eventually continuous ones. Firstly, we provide the equivalent condition for the convergence of the semigroup to its unique invariant measure in total variation distance, i.e.,
	\begin{equation}
		\left.
		\begin{array}{rr}
			\exists z\in E, \  \{P_t\}_{t\geq 0} \ \text{is} \  TV\text{-eventually continuous at $z$}    \\
			\forall x\in E, \forall \epsilon>0, \inf\limits_{x\in E}\liminf\limits_{t\rightarrow\infty} P_t(x, B(z,\epsilon))>0
		\end{array}
		\right\}
		\bm{\Leftrightarrow} \ \ ||P_t(x,\cdot)-\mu||_{TV}\rightarrow 0, \ \text{as} \ t\rightarrow \infty \nonumber,
	\end{equation}
	where $TV$-eventually continuous is a uniform strengthening of eventually continuous, requiring convergence uniformly over all bounded measurable functions. Unlike \eqref{a} and \eqref{b}, we have achieved asymptotic stability in the sense of total variation.

	Secondly, based on Theorem 2 from \cite{stability}, we derive an additional equivalent condition for asymptotic stability under the condition $\mathcal{T}=E$.
	The result in Theorem \ref{thm2} can be summarized as follows: let $\mathcal{T}=E$,
	\begin{equation}
		\left.
		\begin{array}{rrr}
			\exists z\in E, \  \{P_t\}_{t\geq 0} \ \text{is}  \ \text{eventually continuous at $z$}    \\
			\forall x\in E, \forall \epsilon>0,\limsup\limits_{t\rightarrow \infty} Q_t(x, B(z,\epsilon))>0\\
			\liminf\limits_{t\rightarrow\infty} P_t(z, B(z,r))>0
		\end{array}
		\right\}
		\bm{\Leftrightarrow}\ \  \text{asymptotically stable on $E$.} \nonumber
	\end{equation}
	Contrasting with the condition outlined in \eqref{c}, we have shifted our requirement from global equicontinuity to local eventual continuity. Furthermore, instead of requiring $z$ to be within the support of all limit measures of $Q_t(x,\cdot)$, we now only require $z$ to be within the support of at least one such limit measure.

	In addition, we illustrate the applicability of the theorems we have presented through two applications. First, we rigorously establish the asymptotic stability for the Navier-Stokes equations driven by pure jump noise for their Markov semigroup.  Second, we present an SDE driven by a multiplicative Poisson process and establish its asymptotic stability. Importantly, while the eventual continuity of its Markov semigroup can be readily shown in local regions, demonstrating it globally is much more intricate.
	
	The structure of this paper is as follows. In Section \ref{sec2}, we introduce the necessary concepts and notations for this paper and present our main results. Subsequently, in Section \ref{section3}, we provide two applications to illustrate the applicability of our theorems. In Section \ref{proofs}, we present the proofs of the main theorems.
	
	\section{Notations and main results}\label{sec2}
	
	Let $(E,d)$  be a Polish space and $\mathcal{M}(E)$ be the set of all probability measures on $E$. We introduce the following notations. For a Polish space $E$ we denote respectively by $B_b(E)$, $C_b(E)$ and $L_b(E)$ the space of bounded and measurable functions, the space of continuous and bounded functions, and the space of Lipschitz and bounded functions on $E$. Let $\mu\in\mathcal{M}(E)$, we denote the support of the measure $\mu$ by
	\begin{align*}
		\text{supp}\ \mu=\{x\in E: \mu(B(x, \epsilon))> 0, \forall \epsilon >0\},
	\end{align*}
	where $B(x, \epsilon)=\{y\in E: d(x,y)<\epsilon\}$.
	
	Denote by $\{P_{t}\}_{t\geq0}$ the homogeneous Markov-Feller semigroup. Let $\{P_t^*\}_{t\geq 0}$ be the dual semigroup defined on $\mathcal{M}(E)$ by the formula $P_t^*\mu(\cdot):=\int_E P_t\mathbf{1}_{\cdot} \mathrm{d}\mu$ for any $\mu\in\mathcal{M}(E)$. And we define
	\begin{align}\label{defQ}
		Q_t^*\mu:=\frac1t\int_0^t P_s^*\mu \dif s, \ \ \forall \mu \in \mathcal{M}(E).
	\end{align}
	We write $Q_t(x, \cdot)$ in particular case when $\mu=\delta_x$. Let
	\begin{align}\label{defT}
		\mathcal{T}=\{x\in E: \{Q_t(x,\cdot)\}_{t\geq 0} \ \text{is tight}\}.
	\end{align}
	

	\begin{definition}\label{def1}
		A Markov semigroup $\{P_t\}_{t\geq 0} $ satisfies the eventually continuous at $z\in E$, if for every $f\in L_b(E)$,
		\begin{align*}
			\limsup_{x\rightarrow z}\limsup_{t\rightarrow \infty}|P_tf(x)-P_tf(z)|=0.
		\end{align*}
	\end{definition}
	Eventual continuity, an $e$-property extension, was initially presented in \cite{Gong1} and \cite{Jaroszewska}, with \cite{Jaroszewska}  referring to it as the asymptotic equicontinuity condition.
	
	Now, we give a new version of the {$TV$-eventual continuity}.
	\begin{definition}\label{defB-e}
		A Markov semigroup $\{P_t\}_{t\geq 0}$ satisfies the {$TV$-eventually continuous} at $z\in E$, if
		\begin{align*}
			\limsup_{x\rightarrow z}\limsup_{t\rightarrow \infty}\sup_{||f||_{\infty}= 1} |P_tf(x)-P_tf(z)|=0.
		\end{align*}
	\end{definition}
	
	Before presenting the main results, let us recall the following definitions.

	\begin{definition}
		The total variance distance of two probability measures $\mu_1$, $\mu_2\in \mathcal{M}(E)$ is defined by
		\begin{align*}
			||\mu_1-\mu_2||_{TV}=\frac12\sup_{||f||_\infty = 1}\Big|\int_{E} f(x)\mu_1(dx)-\int_{E} f(x)\mu_2(dx) \Big|.
		\end{align*}
	\end{definition}

	\begin{definition}\label{defas}
		A Markov semigroup $\{P_t\}_{t\geq 0}$  is {asymptotically stable}, if there exists a measure $\mu\in\mathcal{M}(E)$ such that
		\begin{align*}
			\lim_{t\rightarrow \infty} P_t^*\nu=\mu, \ \ \forall \nu\in\mathcal{M}(E).
		\end{align*}
	\end{definition}

	The first result is stated in the following theorem.
	
	\begin{theorem}\label{thm0}
		Let $\{P_t\}_{t\geq 0}$ be a Markov-Feller semigroup on $E$. Then the following two statements are equivalent:\\
		(1) $\{P_t\}_{t\geq 0}$ has a unique invariant measure $\mu$ and satisfies for any $\nu\in \mathcal{M}(E)$,
		\begin{align}\label{TV}
			\lim_{t\rightarrow\infty} ||P_t^*\nu-\mu||_{TV}=0.
		\end{align}
		(2) There exists $z\in E$ such that $\{P_t\}_{t\geq 0}$ is $TV$-eventually continuous at $z$ and for all $\epsilon >0$
		\begin{align} \label{C4}
			\inf_{x\in E}\liminf_{t\rightarrow\infty} P_t(x, B(z,\epsilon))>0.
		\end{align}
	\end{theorem}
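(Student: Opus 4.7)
The plan is to prove the two implications separately, with the forward direction being routine and the main work lying in the reverse direction.

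For $(1)\Rightarrow(2)$, pick any $z\in\mathrm{supp}\,\mu$ (non-empty since $\mu$ is a probability measure). TV-eventual continuity at $z$ then follows from the triangle inequality $||P_t^*\delta_x-P_t^*\delta_z||_{TV}\leq||P_t^*\delta_x-\mu||_{TV}+||\mu-P_t^*\delta_z||_{TV}$, whose right-hand side vanishes as $t\to\infty$ by hypothesis, so $\limsup_t||P_t^*\delta_x-P_t^*\delta_z||_{TV}=0$ for every $x$. For \eqref{C4}, TV convergence controls arbitrary Borel sets, giving $\liminf_t P_t(x,B(z,\epsilon))=\mu(B(z,\epsilon))>0$ uniformly in $x$.

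For $(2)\Rightarrow(1)$, first observe that TV-eventual continuity is strictly stronger than the ordinary eventual continuity of Definition \ref{def1} (rescale any $f\in L_b(E)$ by $||f||_\infty$), so the result of \cite{Gong3} applies and yields a unique invariant measure $\mu$ together with weak convergence $P_t^*\nu\to\mu$ for every $\nu$. The task reduces to upgrading weak convergence to TV convergence. Set $\gamma(x,y):=\lim_{t\to\infty}||P_t^*\delta_x-P_t^*\delta_y||_{TV}$; this limit exists because the sequence is non-increasing by TV-contractivity of $P_t^*$. Write $\gamma^*:=\sup_{x,y\in E}\gamma(x,y)\in[0,1]$; the heart of the argument is to show $\gamma^*=0$ via a self-improving estimate.

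Fix $\epsilon>0$. Use TV-eventual continuity at $z$ to choose $\delta>0$ so that $\gamma(u,z)\leq\epsilon/4$ for every $u\in B(z,\delta)$, and set $2\alpha:=\inf_x\liminf_t P_t(x,B(z,\delta))>0$. For each $x$ there is a threshold $T_x$ with $P_t(x,B(z,\delta))\geq\alpha$ for $t\geq T_x$, enabling the Doeblin-type decomposition $P_t(x,\cdot)=\alpha\mu_x^t+(1-\alpha)\eta_x^t$ where $\mu_x^t$ is a probability measure supported on $B(z,\delta)$. Propagating by $P_s^*$ and applying the triangle inequality yields
\begin{multline*}
||P_{t+s}^*\delta_x-P_{t+s}^*\delta_y||_{TV}\leq\alpha\bigl(||P_s^*\mu_x^t-P_s^*\delta_z||_{TV}+||P_s^*\mu_y^t-P_s^*\delta_z||_{TV}\bigr)\\
+(1-\alpha)||P_s^*\eta_x^t-P_s^*\eta_y^t||_{TV}.
\end{multline*}
The convexity bound $||P_s^*\lambda-P_s^*\lambda'||_{TV}\leq\iint||P_s^*\delta_u-P_s^*\delta_v||_{TV}\,d\pi(u,v)$ for any coupling $\pi$ of probability measures $\lambda,\lambda'$, combined with dominated convergence (the integrand is bounded by $1$ and tends pointwise to $\gamma(u,v)$), allows one to pass $s\to\infty$ inside each integral. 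The first two terms on the right are then bounded by $\int\gamma(u,z)\,d\mu_x^t(u)\leq\epsilon/4$ each, and the last by $\iint\gamma(u,v)\,d(\eta_x^t\otimes\eta_y^t)(u,v)\leq\gamma^*$ via the product coupling. Since the left side converges to $\gamma(x,y)$, one concludes $\gamma(x,y)\leq\alpha\epsilon/2+(1-\alpha)\gamma^*$; supremizing in $(x,y)$ and rearranging gives $\gamma^*\leq\epsilon/2$, and arbitrariness of $\epsilon$ forces $\gamma^*=0$.

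Once $\gamma\equiv 0$, invariance of $\mu$ delivers $||P_t^*\delta_x-\mu||_{TV}\leq\int||P_t^*\delta_x-P_t^*\delta_y||_{TV}\,d\mu(y)\to 0$ by dominated convergence, and a second integration extends TV convergence to every $\nu\in\mathcal{M}(E)$. The principal obstacle is the self-improving inequality itself: because the threshold $T_x$ in the Doeblin decomposition is only guaranteed pointwise in $x$, no uniform bound on $\sup_{x,y}||P_t^*\delta_x-P_t^*\delta_y||_{TV}$ is available, and the argument must be framed in terms of the pairwise limits $\gamma(x,y)$ with the supremum postponed until after $s\to\infty$; dominated convergence then converts the local TV-eventual control at $z$ into a genuine contraction factor $(1-\alpha)$ acting on $\gamma^*$.
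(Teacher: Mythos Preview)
Your proof is correct; the forward direction $(1)\Rightarrow(2)$ coincides with the paper's. For $(2)\Rightarrow(1)$ you take a genuinely different route.

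The paper follows the classical iterated Doeblin decomposition in the style of \cite{Feller}: it inductively constructs times $t_1,\dots,t_k$ and measures $\nu_i^{x_j},\mu_i^{x_j}$ with $P_{t_i}^*\mu_{i-1}^{x_j}=\alpha\nu_i^{x_j}+(1-\alpha)\mu_i^{x_j}$ and $\mathrm{supp}\,\nu_i^{x_j}\subset B(z,\delta)$, expands $P_{t_1+\cdots+t_k}^*\delta_{x_j}$ as a telescoping sum with geometric weights, and bounds $\limsup_t\sup_{\|f\|_\infty=1}|P_tf(x_1)-P_tf(x_2)|$ by $[1-(1-\alpha)^k]\,\epsilon/2+2(1-\alpha)^k$, sending $k\to\infty$ at the end. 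You instead exploit the monotonicity of $t\mapsto\|P_t^*\delta_x-P_t^*\delta_y\|_{TV}$ to define the pointwise limit $\gamma(x,y)$, perform a \emph{single} Doeblin split, pass $s\to\infty$ under the coupling integral, and close the self-improving inequality $\gamma^*\le\alpha\epsilon/2+(1-\alpha)\gamma^*$ in one step. In effect your argument is the $k\to\infty$ limit of the paper's, packaged without the inductive bookkeeping. Your version is shorter; the paper's explicit iteration, on the other hand, does not rely on the existence of the limit $\gamma(x,y)$ and therefore transfers unchanged to the weak-convergence setting of Proposition~\ref{pro1}, where no monotonicity in $t$ is available. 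Both proofs invoke \cite{Gong3} for the existence of the invariant measure. One minor technical point you (and the paper) leave implicit is the Borel measurability of $(u,v)\mapsto\|P_s^*\delta_u-P_s^*\delta_v\|_{TV}$, needed for the coupling bound and for dominated convergence; on a Polish space this is standard via a jointly measurable Radon--Nikodym derivative with respect to $\tfrac12(P_s(u,\cdot)+P_s(v,\cdot))$.
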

	
	Replacing the $TV$-eventually continuous hypothesis in $(2)$ with eventually continuous and following similar arguments as in Theorem \ref{thm0}, one has the next propostion, which was previously established in \cite[Theorem 3.16]{Gong3}:
	\begin{prop}\label{pro1}
		Let $\{P_t\}_{t\geq 0}$ be a Markov-Feller semigroup on $E$. Then the following two statements are equivalent:\\
		(1) $\{P_t\}_{t\geq 0}$ is asymptotic stability with unique invariant measure $\mu$;\\
		(2) There exists $z\in E$ such that $\{P_t\}_{t\geq 0}$ is eventually continuous at $z$ and for all $\epsilon >0$ \eqref{C4} is hold.
	\end{prop}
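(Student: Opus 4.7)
The plan is to reproduce the scheme of Theorem \ref{thm0}, substituting eventual continuity for $TV$-eventual continuity and weak convergence for convergence in total variation. Asymptotic stability (Definition \ref{defas}) means $P_t^*\nu \to \mu$ weakly for every $\nu \in \mathcal{M}(E)$, and since $L_b(E)$ is convergence-determining on a Polish space, it suffices to show $P_tf(x) \to \mu(f)$ for each fixed $f \in L_b(E)$ and each $x \in E$. This matches precisely the class of test functions governed by Definition \ref{def1}, so every instance of $\sup_{\|f\|_\infty = 1}$ in the proof of Theorem \ref{thm0} may simply be dropped and the same argument run one $f$ at a time.

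For $(1) \Rightarrow (2)$: asymptotic stability gives $P_tf(x) \to \mu(f)$ pointwise for every $f \in L_b(E) \subset C_b(E)$, so $|P_tf(x) - P_tf(z)| \to 0$ as $t \to \infty$ for any $x, z \in E$, which yields eventual continuity at every $z$. To verify \eqref{C4}, pick any $z \in \operatorname{supp}\mu$; then $\mu(B(z,\epsilon)) > 0$ for every $\epsilon > 0$, and since $B(z,\epsilon)$ is open, the Portmanteau theorem yields $\liminf_{t\to\infty} P_t(x, B(z,\epsilon)) \geq \mu(B(z,\epsilon)) > 0$ uniformly in $x \in E$.

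For the substantive direction $(2) \Rightarrow (1)$, I would fix $f \in L_b(E)$ and use the semigroup identity together with the decomposition
\begin{equation*}
P_{t+s} f(x) = \int_{B(z,\delta)} P_s f(w)\, P_t(x, dw) + \int_{B(z,\delta)^c} P_s f(w)\, P_t(x, dw).
\end{equation*}
Eventual continuity at $z$ controls $|P_s f(w) - P_s f(z)|$ on $B(z,\delta)$ for $\delta$ small and $s$ large, while \eqref{C4} forces $P_t(x, B(z,\delta))$ to remain uniformly positive in $x \in E$ for $t$ large. Following the template of Theorem \ref{thm0}, I would first show that $\{P_t f(z)\}_{t\geq 0}$ is Cauchy and converges to some limit $\mu(f)$, next extend $\mu$ to an invariant probability measure via Riesz representation combined with the Feller property, and finally — applying the same decomposition at an arbitrary base point $x$ in place of $z$ — upgrade convergence from $z$ to every $x \in E$. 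Uniqueness follows by running the same machinery on the difference of any two candidate invariant measures.

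The main obstacle is the two-scale calibration inside the decomposition: eventual continuity only gives $\limsup_{s\to\infty} |P_s f(w) - P_s f(z)| < \epsilon$ for $w \in B(z,\delta)$ with $\delta$ small, and the threshold on $s$ may depend on $w$, whereas \eqref{C4} provides an $x$-uniform mass lower bound that kicks in for $t$ large. Balancing these — choosing $\delta$ first from eventual continuity, then $s$ and $t$ large enough for the mass estimate — is the core technical step, exactly as in the proof of Theorem \ref{thm0}, which therefore supplies the template verbatim with only the $\sup_{\|f\|_\infty = 1}$ quantifier removed.
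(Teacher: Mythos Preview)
Your bottom line---copy the proof of Theorem~\ref{thm0} verbatim and drop the $\sup_{\|f\|_\infty=1}$---is exactly what the paper does, and it works. But your account of what that template \emph{is} does not match Theorem~\ref{thm0}, and if you executed the plan you describe in the middle paragraph you would get stuck.

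Two concrete mismatches. First, Theorem~\ref{thm0} does not build $\mu$ by showing $\{P_tf(z)\}$ is Cauchy and invoking Riesz; its Step~1 obtains an invariant measure from tightness of $\{P_t(z,\cdot)\}$ (citing \cite{Gong3}), and only afterwards compares $P_tf(x_1)$ with $P_tf(x_2)$. Second---and this is the real engine---the decomposition in Step~2 is not the single split
\[
P_{t+s}f(x)=\int_{B(z,\delta)}P_sf(w)\,P_t(x,dw)+\int_{B(z,\delta)^c}P_sf(w)\,P_t(x,dw)
\]
that you wrote. Condition~\eqref{C4} only guarantees mass $>\alpha$ in $B(z,\delta)$ for some $\alpha\in(0,\tfrac12)$, so after one step the uncontrolled remainder still has size $1-\alpha$, which is not small. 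The actual mechanism is Szarek's \emph{iterated} splitting: one inductively peels off a $B(z,\delta)$-supported piece of mass $\alpha$ at successive times $t_1,\dots,t_k$, writing
\[
P_{t_1+\cdots+t_k}^*\delta_{x_j}=\alpha\sum_{i=1}^{k}(1-\alpha)^{i-1}P_{t_{i+1}+\cdots+t_k}^*\nu_i^{x_j}+(1-\alpha)^k\mu_k^{x_j},
\]
so the bad remainder decays geometrically as $(1-\alpha)^k$. Eventual continuity at $z$ then controls each $\nu_i$-term for a \emph{fixed} $f\in L_b(E)$, and letting $k\to\infty$ gives $|P_tf(x_1)-P_tf(x_2)|\to0$. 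That is the step your ``two-scale calibration'' paragraph is gesturing at, but a single application of the split cannot deliver it.
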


Inspired by \cite[Proposition 7.1]{phdth}, we derive a sufficient condition for the asymptotic stability of Markov-Feller semigroups.  
	
	\begin{corollary}\label{col1}
		Let $\{P_t\}_{t\geq 0}$ be a Markov-Feller semigroup on $E$ associated with a  Feller process $\{X_t\}_{t\geq 0}$. Assume that there exists $z\in E$ such that $\{P_t\}_{t\geq 0}$ is eventually continuous at $z$. And there exists $\tilde{r}>0$, for all $\epsilon>0$, $r>\tilde{r}$, there exists $T>0$ such that
		\begin{align*}
			\inf_{x\in B(z,r)}P_{T}(x, B(z,\epsilon))>0.
		\end{align*}
		Furthermore, there exists $p> 0$, $b>0$ and $\kappa>0$ such that for every $x\in E$
		\begin{align*}
			\EX [d(X_t^x,z)]^p\le \rho_x(t)+b\mathbf{1}_{B(z,\kappa)},
		\end{align*}
		where $\rho_x:(0,\infty)\rightarrow (0,\infty)$ is a nonincreasing function and $\lim\limits_{t\rightarrow \infty}\rho_x(t)=0$, for all $x\in E$.   Then, $\{P_t\}_{t\geq 0}$ is asymptotically stable.
	\end{corollary}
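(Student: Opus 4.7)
The plan is to invoke Proposition \ref{pro1}, in the direction $(2)\Rightarrow(1)$. Since eventual continuity at $z$ is already in the hypotheses, the only remaining ingredient is the uniform lower bound \eqref{C4}, namely
\[
\inf_{x\in E}\liminf_{t\rightarrow\infty}P_t(x,B(z,\epsilon))>0 \quad \text{for every } \epsilon>0.
\]
The strategy is two-step: first show that trajectories eventually concentrate, uniformly in $x$, inside some sufficiently large ball $B(z,r)$; then upgrade this to an arbitrarily small ball $B(z,\epsilon)$ by combining the Chapman--Kolmogorov identity with assumption 2.

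For the first step, I would apply Markov's inequality together with the moment bound: for any $r>0$ and any $x\in E$,
\[
P_t(x, E\setminus B(z,r)) \le \frac{\EX [d(X_t^x,z)^p]}{r^p} \le \frac{\rho_x(t)+b\,\mathbf{1}_{B(z,\kappa)}(x)}{r^p} \le \frac{\rho_x(t)+b}{r^p}.
\]
Choosing $r > \max(\tilde r,(2b)^{1/p})$ and using $\rho_x(t)\to 0$ as $t\to\infty$ yields
\[
\liminf_{t\to\infty} P_t(x,B(z,r)) \ge 1 - \frac{b}{r^p} \ge \frac12
\]
for \emph{every} $x\in E$, hence $\inf_{x\in E}\liminf_{t\to\infty}P_t(x,B(z,r))\ge 1/2$.

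For the second step, fix $\epsilon>0$ and retain the radius $r$ from above. Since $r>\tilde r$, assumption 2 furnishes some $T>0$ with $c:=\inf_{y\in B(z,r)}P_T(y,B(z,\epsilon))>0$. The Chapman--Kolmogorov identity then gives
\[
P_{t+T}(x,B(z,\epsilon)) = \int_E P_T(y,B(z,\epsilon))\,P_t(x,dy) \ge c\,P_t(x,B(z,r)),
\]
so combining with the first step yields
\[
\inf_{x\in E}\liminf_{t\to\infty}P_t(x,B(z,\epsilon)) \ge \frac{c}{2} >0,
\]
which is \eqref{C4}. Proposition \ref{pro1} then delivers asymptotic stability.

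The argument is largely bookkeeping, and the main, though mild, subtlety is the joint choice of $r$: it must simultaneously exceed $\tilde r$ so that assumption 2 is applicable and be large enough that the Markov tail contribution $b/r^p$ is strictly less than one. Both requirements are satisfied by the single choice $r>\max(\tilde r,(2b)^{1/p})$. A second point worth flagging is that $\rho_x(t)$ need only vanish pointwise in $x$, not uniformly; this is harmless because the concentration in $B(z,r)$ is derived pointwise, while the resulting lower bound $1/2$ is already uniform in $x$.
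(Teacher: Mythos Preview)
Your proposal is correct and follows essentially the same route as the paper's proof: Markov's inequality plus the moment bound to get $\liminf_{t\to\infty}P_t(x,B(z,r))\ge 1/2$ for a suitably large $r$, then Chapman--Kolmogorov combined with the local irreducibility hypothesis to transfer this to $B(z,\epsilon)$, and finally an appeal to Proposition~\ref{pro1}. Your version is in fact slightly more explicit, since you spell out the threshold $r>\max(\tilde r,(2b)^{1/p})$ rather than merely asserting the existence of a sufficiently large $r_0$.
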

	
	Next, we provide a different set of conditions for asymptotic stability.

	\begin{theorem}\label{thm2}
		Let $\{P_t\}_{t\geq 0}$ be a Markov-Feller semigroup on $E$, and $\mathcal{T}=E$. Then the following two statements are equivalent:\\
		(1) $\{P_t\}_{t\geq 0}$ is asymptotic stability with unique invariant measure $\mu$;\\
		(2) There exists $z$ such that $\{P_t\}_{t\geq 0}$ is eventually continuous at $z$ and for all $x\in E$, $\epsilon >0$,
		\begin{align}
			\limsup_{t\rightarrow\infty} Q_t(x, B(z,\epsilon))>0, \label{C1}\\
			\liminf_{t\rightarrow\infty} P_t(z, B(z,\epsilon))>0\label{C2}.
		\end{align}
	\end{theorem}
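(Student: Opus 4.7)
The plan is to prove the two implications separately, with the forward direction being essentially formal and the reverse direction carrying the main work.

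For $(1) \Rightarrow (2)$: Asymptotic stability yields the unique invariant $\mu$; pick $z \in \text{supp}\,\mu$ (nonempty since $\mu$ is a probability measure). Pointwise convergence $P_t f(x) \to \int f\, d\mu$ for every $f \in L_b(E)$ gives $|P_t f(x) - P_t f(z)| \to 0$, hence eventual continuity at $z$. Since $\mu(B(z,\epsilon)) > 0$, the Portmanteau theorem applied to $P_t^*\delta_x \to \mu$ and $Q_t^*\delta_x \to \mu$ yields both \eqref{C1} (in fact with $\liminf$) and \eqref{C2}.

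For $(2) \Rightarrow (1)$ I plan a reduction in three preliminary steps that ultimately feeds into Proposition \ref{pro1} or adapts \cite[Theorem 2]{stability}. First, using $\mathcal{T}=E$ and the Krylov-Bogolyubov construction, I extract an invariant measure $\mu$ as a weak subsequential limit of $Q_t(x,\cdot)$; choosing the subsequence so that $Q_{t_n}(x, B(z,\epsilon))$ achieves its positive $\limsup$ from \eqref{C1} and applying the closed-set Portmanteau inequality, together with a diagonal argument in $\epsilon$, gives $z \in \text{supp}\,\mu$. Second, for any $\epsilon > 0$ I pick a Lipschitz bump $f$ with $\mathbf{1}_{B(z,\epsilon/4)} \le f \le \mathbf{1}_{B(z,\epsilon/2)}$; \eqref{C2} supplies $\liminf_t P_t f(z) \ge \delta$ for some $\delta > 0$, and eventual continuity at $z$ produces a radius $\rho(\epsilon) > 0$ with $\liminf_t P_t(x, B(z,\epsilon)) \ge \delta/2$ for every $x \in B(z,\rho)$. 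Third, for arbitrary $x \in E$, \eqref{C1} supplies times $s$ with $P_s(x, B(z,\rho)) > 0$ (a positive Ces\`aro average forces a positive instantaneous value), and a Chapman-Kolmogorov identity combined with Fatou's lemma lifts the previous local bound to $\liminf_t P_t(x, B(z,\epsilon)) > 0$ pointwise in $x$.

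The main obstacle is the final passage from this pointwise lower bound to full asymptotic stability. If uniformity $\inf_{x \in E} \liminf_t P_t(x, B(z,\epsilon)) > 0$ can be secured---for instance by exploiting $\mathcal{T}=E$-tightness on an exhausting sequence of compacts together with Feller continuity of $P_t$---then Proposition \ref{pro1} applies directly. Otherwise, one must adapt the argument underlying \cite[Theorem 2]{stability}, replacing its global $e$-property hypothesis with the single-point eventual continuity at $z$ and routing all asymptotic control through the returns to $B(z,\rho)$ supplied by the preceding steps. The key technical challenge is to show, for every $f \in L_b(E)$, that $\int f\, d\mu$ is the unique subsequential limit of both $Q_t f(x)$ and $P_t f(x)$ without access to a global equicontinuity estimate.
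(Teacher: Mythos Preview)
Your forward direction and your local lower bound (from \eqref{C2} plus eventual continuity at $z$, yielding $\liminf_t P_t(y,B(z,\epsilon))\ge\delta/2$ for $y\in B(z,\rho)$) are correct and match the paper. The gap is exactly where you flag it: your third step only produces a \emph{pointwise} bound $\liminf_t P_t(x,B(z,\epsilon))>0$, with the constant depending on the unspecified value $P_s(x,B(z,\rho))$ at some $s=s(x)$. Neither of your proposed repairs closes this. Tightness plus the Feller property gives continuity of $x\mapsto P_t(x,\cdot)$ only for \emph{fixed} $t$; without equicontinuity in $t$ you cannot pass a compact-set infimum through the $\liminf_t$, so no uniform lower bound emerges. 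And ``adapt \cite[Theorem 2]{stability}'' is precisely the missing content, not a plan.

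The paper fills this gap by inserting a decisive intermediate step you do not have: it first proves \emph{weak* mean ergodicity}, i.e.\ $Q_t^*\nu\to\mu_*$ weakly for every $\nu\in\mathcal{M}(E)$ and a single invariant $\mu_*$. This is done via an adaptation of the $\mathcal{D}$-set argument of \cite[Lemma~3]{epro}, where the global $e$-property is replaced by eventual continuity at the single point $z$; one shows $\sup\mathcal{D}=1$ by repeatedly conditioning on visits to $B(z,\sigma)$ (supplied by the upgraded bound $\liminf_t Q_t(x,B(z,\epsilon))>0$, itself obtained from \eqref{C1} and eventual continuity of $Q_t$ at $z$). Once mean ergodicity is in hand, the uniformity you are missing is automatic: for every $x$ one has $\liminf_t Q_t(x,B(z,\rho))\ge \mu_*(B(z,\rho))=:\gamma>0$, and $\gamma$ does not depend on $x$. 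Hence there is $t_0=t_0(x)$ with $P_{t_0}(x,B(z,\rho))>\gamma$, and Chapman--Kolmogorov plus Fatou give
\[
\liminf_{t\to\infty}P_t(x,B(z,\epsilon))\ \ge\ \gamma\cdot\frac{\delta}{2}\,,
\]
which is uniform in $x$. Proposition~\ref{pro1} then finishes the proof. In short, the engine you are missing is the mean-ergodicity step; without it the uniform constant needed to invoke Proposition~\ref{pro1} never materialises.
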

	
	In fact, if $\mathcal{T}=E$, $\{P_t\}_{t\geq 0}$ is eventually continuous at $z$ and for all $x\in E$, $\epsilon >0$, \eqref{C1} is holds, then $\{P_t\}_{t\geq 0}$ is weak* mean ergodic(see \cite[Definition 2.2]{epro}). This can be derived from the first and second steps in the proof of Theorem \ref{thm2}.

	\section{Application}\label{section3}
	In this section, we present two applications. First, we demonstrate the ergodicity of the stochastic Navier-Stokes equation with degenerate pure jump noise. Second, we establish the asymptotic stability of a class of SDEs with multiplicative Poisson noise.

	\subsection{2D Navier-Stokes equations with degenerate L\'evy noise revisited}
	
	In this subsection, we demonstrate the uniqueness of an invariant measure for stochastic Navier-Stokes equations. Although there are a lot of conclusions regarding the invariant measures of the two-dimensional Navier-Stokes equation, our research establishes the asymptotic stability of these equations. Moreover, we no longer need to rely on tightness to establish the existence of invariant measures, this existence has become a byproduct of our study.

	In \cite{Hairer-1}, the authors proved the uniqueness of the invariant measure for the 2D stochastic Navier-Stokes equation driven by degenerate Brownian motion, using the asymptotic strong Feller property and a weak form of irreducibility. Recently, the ergodicity for stochastic 2D Navier-Stokes equations driven by a highly degenerate pure jump L\'evy noise has been studied in \cite{NSdbL}.   In \cite{Gong1}, the authors have established that the Navier-Stokes equation, driven by degenerate Brownian motions, satisfies eventual continuity and that condition \eqref{C4} holds.

	As an application, this paper revisits the ergodicity of the two-dimensional Navier-Stokes equation with degenerate L\'evy noise.

	Consider the two-dimensional, incompressible Navier-Stokes equation on the torus $\mathbb{T}^2=[-\pi, \pi]^2$ driven by degenerate noise. Denote by
	\begin{align}\label{space}
		H:=\{u(x)\in L^2(\mathbb{T}^2;\mathbb{R}^2): \ \int_{\mathbb{T}^2}  u(x) \dif x=0\}.
	\end{align}
	Let $W_{S_t}=(W_{S_t}^1, W_{S_t}^2, W_{S_t}^3, W_{S_t}^4)$ is a $4$-dimensional subordinated Brownian motion, where $S_t$ is a subordinator process with measure $\nu_S$ satisfying
	\begin{align*}
		\int_0^\infty (e^{\zeta u}-1) \nu_S(\dif u)<\infty \ \ \ \text{for some}\ \ \  \zeta>0,\ \ \ \text{and}\ \ \ \nu_S((0,\infty))=\infty.
	\end{align*}
	The Navier-Stokes equation is given by
	\begin{align} \label{NS}
		{\dif}{u}_t + \big[A{u}_t + B({u}_t, {u}_t)\big] {\dif}t=Q\dif {W}_{S_t}, \ \  u_0=x\in H,
	\end{align}
	where $A$ is a positive self-adjoint operator, $B$ is a bilinear continuous mapping, and $Q:\mathbb{R}^4\rightarrow H$ is a linear operator, satisfying
	\begin{align*}
		QW_{S_t}= q_1\sin(x_1)W_{S_t}^1+q_2\cos(x_1)W_{S_t}^2+q_3\sin(x_1+x_2)W_{S_t}^3+q_4\cos(x_1+x_2)W_{S_t}^4.
	\end{align*}
	Here $q_1,\cdots, q_4$ are non-zero constants.
	For further requirements on the coefficient and noise of this equation, please refer to \cite[Section 1.2]{NSdbL}. Therefore, applying Theorem~\ref{thm2} to \eqref{NS}, we have

	\begin{prop}
		Assume that $\{P_t\}_{t\geq 0}$ is the Markov-Feller semigroup associated with \eqref{NS}. Then, $\{P_t\}_{t\geq 0}$ is asymptotically stable.
	\end{prop}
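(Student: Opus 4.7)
The plan is to apply Theorem \ref{thm2} with $z = 0 \in H$, which requires verifying: (i) $\mathcal{T} = H$; (ii) eventual continuity of $\{P_t\}_{t\geq 0}$ at $0$; (iii) the time-averaged lower bound \eqref{C1} at $z=0$; and (iv) the pointwise lower bound \eqref{C2} at $z=0$.

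For (i), set $V := \mathrm{Dom}(A^{1/2})$, so that $V \hookrightarrow H$ compactly. Applying It\^o's formula to $\|u_t\|_H^2$, the antisymmetry $\langle B(u,u), u\rangle = 0$, the coercivity of $A$, and the exponential-moment condition on $\nu_S$ (which in particular controls $\mathbb{E}\|Q W_{S_t}\|_H^2$) yield
\begin{equation*}
\mathbb{E}\|u_t\|_H^2 + c\,\mathbb{E}\int_0^t \|u_s\|_V^2 \dif s \leq \|x\|_H^2 + C t
\end{equation*}
for some $c, C > 0$. Dividing by $t$ and combining Markov's inequality with the compact embedding shows that $\{Q_t(x,\cdot)\}_{t \geq 0}$ is tight in $H$ for every $x$, hence $\mathcal{T} = H$. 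For (iii) and (iv), I would invoke the hypoelliptic irreducibility analysis of \cite{NSdbL} carried out for the specific form of $Q$ in \eqref{NS}: it shows that any limit point of $\{Q_t(x,\cdot)\}$ charges every neighborhood of $0$, giving \eqref{C1}, and that $P_t(0, B(0,\epsilon))$ stays bounded away from $0$ for large $t$, giving \eqref{C2}.

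The crux is (ii): eventual continuity at $0$. Since the noise $Q\dif W_{S_t}$ is additive, the difference $v_t := u_t^x - u_t^0$ of two solutions driven by the same noise path satisfies the closed equation
\begin{equation*}
\partial_t v_t + A v_t + B(v_t, u_t^0) + B(u_t^x, v_t) = 0, \qquad v_0 = x,
\end{equation*}
and a standard 2D enstrophy argument (using Ladyzhenskaya's inequality on the trilinear form) gives
\begin{equation*}
\|v_t\|_H^2 \leq \|x\|_H^2 \exp\Bigl(-c' t + C' \int_0^t \|u_s^0\|_V^2 \dif s\Bigr).
\end{equation*}
Combining this with the uniform-in-$t$ moment control of $\int_0^t \|u_s^0\|_V^2 \dif s$ supplied by \cite{NSdbL}, one deduces that $\|v_t\|_H \to 0$ in probability as $\|x\|_H \to 0$, uniformly in $t$. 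For any $f \in L_b(H)$, splitting the expectation on $\{\|v_t\|_H < \delta\}$ and its complement then gives $\limsup_{x \to 0}\limsup_{t\to\infty}|P_t f(x) - P_t f(0)| = 0$, which is exactly eventual continuity at $0$.

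The main obstacle is step (ii): upgrading the pathwise inequality for $v_t$ to a genuinely uniform-in-$t$ continuity statement requires quantitative moment control on $u_t^0$ that is insensitive to $t$, which in turn relies on the subordinated noise structure and the exponential moment $\int(e^{\zeta u}-1)\nu_S(\dif u) < \infty$; this is precisely where the machinery of \cite{NSdbL} must be leveraged. Once (i)--(iv) are in place, Theorem \ref{thm2} produces a unique invariant measure $\mu \in \mathcal{M}(H)$ with $P_t^* \nu \to \mu$ for every $\nu \in \mathcal{M}(H)$, which is the claimed asymptotic stability.
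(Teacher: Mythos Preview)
Your overall plan---apply Theorem~\ref{thm2} at $z=0$ and import the hard analytic inputs from \cite{NSdbL}---is exactly what the paper does. Your verification of $\mathcal{T}=H$ via the compact embedding $V\hookrightarrow H$ is a correct addition that the paper leaves implicit, and steps (iii)--(iv) are obtained by combining the energy bound with the irreducibility result of \cite{NSdbL}, just as in the paper's derivation of \eqref{LB}.

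The genuine gap is in step~(ii). The pathwise inequality
\[
\|v_t\|_H^2 \le \|x\|_H^2\exp\Bigl(-c't + C'\int_0^t\|u_s^0\|_V^2\,\dif s\Bigr)
\]
is correct, but there is no ``uniform-in-$t$ moment control'' of $\int_0^t\|u_s^0\|_V^2\,\dif s$: its expectation grows linearly in $t$, and the sign of the exponent is decided by the competition between $c'$ and $C'$ times the time-averaged enstrophy. That competition is won only under a smallness assumption on the forcing relative to the viscosity---precisely the restriction that the degenerate-noise framework of \cite{Hairer-1} and \cite{NSdbL} is designed to remove. What \cite[Proposition~1.4]{NSdbL} actually supplies is the full $e$-property \eqref{e-p}, obtained through a gradient/Malliavin-type estimate adapted to the subordinated Brownian structure (the jump analogue of the asymptotic strong Feller machinery in \cite{Hairer-1,gap}), not through moment bounds on the enstrophy integral. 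The paper accordingly just cites \cite[Proposition~1.4]{NSdbL} for the $e$-property (hence eventual continuity) and does not attempt a direct contraction argument; your sketch of step~(ii) mischaracterizes the input from \cite{NSdbL} and should be replaced by that citation.
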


	\begin{proof}
		From \cite[Proposition 1.4]{NSdbL}, we deduce that $\{P_t\}_{t\geq 0}$ is eventually continuous on $H$. From \cite[Proposition 1.5; Lemma 2.1]{NSdbL}, it follows that equation \eqref{C4} holds.
		
		Indeed, by \cite[Proposition 1.4]{NSdbL}, for any $f\in L_b^1(H)$, $\epsilon>0$ and $x\in H$, there exists a $\delta>0$ such that
		\begin{align}\label{e-p}
			| P_t f(x)-P_tf(y)|\le \epsilon, \ \  \forall t>0 \ \ \text{and}\ \ y \ \text{with}\  ||y-x||_{H}<\delta.
		\end{align}
		This implies that $\{P_t\}_{t\geq 0}$ associated with \eqref{NS} is eventually continuous on $H$.
		
		On the other hand, for a fixed $R>0$ (its value will be provided later on), using \cite[Lemma 2.1]{NSdbL}, and the Chebyshev-Markov inequality we have
		\begin{align*}
			P_t(x, B(0, R))=\EX\mathbf{1}_{\{||u_t^x||_{H}<R\}}=1-\PX(||u_t^x||_{H}\geq R)\geq 1-\dfrac{\EX||u_t^x||_{H}^2}{ R^2}
			\geq  1- \dfrac{e^{-t}||x||_{H}^2+C_1}{R^2},
		\end{align*}
		where $C_1$ is a constant depending on linear operator $Q$,  the L\'evy measure $\nu_{S}$ of pure jump subordinator $S_t$ and, the dimension $d$.
		Let $R>\sqrt{2(||x||_{H}^2+C_1)}$, then
		\begin{align*}
			P_t(x, B(0,R))>\frac12,\ \ \forall t> 0.
		\end{align*}
		From \cite[Proposition 1.5]{NSdbL}, for  any $\epsilon>0$ and $R>\sqrt{2(||x||_{H}^2+C_1)}$, there exists $T>0$ and $p^*>0$, such that $\inf\limits_{x\in B_R} P_T(x, B(0,\epsilon))> p^*>0$. Then, for any $\epsilon>0$ and $x\in H$ we have
		\begin{align}\label{LB}
			\liminf_{t\rightarrow\infty} P_t(x, B(0,\epsilon))&=    \liminf_{t\rightarrow\infty} P_{t+ T}(x, B(0,\epsilon))=\liminf_{t\rightarrow\infty}  \int_{H} P_t(x, \dif y) P_T(y, B(0,\epsilon)) \nonumber \\
			&\geq\liminf_{t\rightarrow\infty}  \int_{B(0,R)} P_t(x, \dif y) P_T(y, B(0,\epsilon))\nonumber \\
			&\geq \liminf_{t\rightarrow\infty} P_t(x, B(0, R)) \inf_{x\in B_R} P_T(x, B(0,\epsilon))\nonumber \\
			&\geq \frac12 \inf_{x\in B_R} P_T(x, B(0,\epsilon))>\frac12 p^*>0.
		\end{align}
		This implies that $\{P_t\}_{t\geq 0}$ associated with \eqref{NS} satisfying conditions \eqref{C1} and \eqref{C2}. Thus, by Theorem \ref{thm2} we conclude that $\{P_t\}_{t\geq 0}$ is asymptotically stable.\bigskip\qed\end{proof}

	\begin{remark}
		We can also prove the asymptotic stability of $\{P_t\}_{t\geq 0}$ associated with \eqref{NS} by applying Theorem \ref{pro1} or Corollary \ref{col1}.
	\end{remark}
	
	\subsection{Stability analysis for SDEs with multiplicative Poisson noise}
	We provide a model with non-degenerate, non-Gaussian noise to demonstrate that verifying the eventual continuity of a Markov-Feller semigroup at a specific point is a straightforward and relatively uncomplicated task.   It also highlights some similarities and differences between the smoothing effects of Gaussian and non-Gaussian noises. Let $N_t$ be a Poisson process of intensity $1$, consider the following SDE:
	\begin{align}\label{pola}
		\dif X_t= (aX_t- bX_t^3) \dif t+ \sigma(X_{t-})\dif N_t, \ \ \ X_0=x\in \mathbb{R},
	\end{align}
	where $a,b>0$, $\sigma$ is a function satisfying: \\
	\noindent${(1)}$ $\exists m,M>0$ such that $\ m<\sigma(x)<M, \ \forall x\in\mathbb{R}$;\\
	${(2)}$ $\exists L_{\sigma}>0$ such that $|\sigma(x)-\sigma(y)|\le L_\sigma|x-y|,\  \forall x,y\in\mathbb{R}$.\\
	
	Then, we have the following result.
	\begin{prop}
		Assume that $\{P_t\}_{t\geq 0}$ is the Markov-Feller semigroup associated with \eqref{pola}. Then, $\{P_t\}_{t\geq 0}$ is asymptotically stable.
	\end{prop}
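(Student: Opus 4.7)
The plan is to apply Proposition \ref{pro1} at the point $z=\sqrt{a/b}$, the positive stable equilibrium of the deterministic drift $\dot{x}=ax-bx^{3}$.

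A Lyapunov estimate with $V(x)=x^{2}$ underpins the analysis. The extended generator of \eqref{pola} yields
\begin{align*}
\mathcal{L}V(x)=2ax^{2}-2bx^{4}+2x\sigma(x)+\sigma(x)^{2},
\end{align*}
and since $\sigma$ takes values in $(m,M)$ the quartic term dominates for large $|x|$, giving $\mathcal{L}V\le-\alpha V+\beta$ with $\alpha,\beta>0$. Dynkin's formula then yields $\EX V(X^{x}_{t})\le V(x)e^{-\alpha t}+\beta/\alpha$, so for $t$ large the laws $P_{t}(x,\cdot)$ concentrate in a bounded interval with probability close to one, uniformly in $x$.

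Next I verify \eqref{C4}. I prove a controllability estimate in two cases: on the no-jump event in $[0,T]$ (probability $e^{-T}$) the process follows the deterministic flow $\phi_{t}$, and $\phi_{T}(y)\to z$ as $T\to\infty$ for every $y>0$; while for $y\le 0$, the positivity of the jump sizes $\sigma(\cdot)>m$ ensures that after a bounded number of jumps the trajectory enters the positive basin of $z$ with positive probability. A finite case-split on $y\in[-R,R]$ then yields $\inf_{y\in[-R,R]}P_{T}(y,B(z,\epsilon))\ge p>0$ for suitable $T$ and $R$. Combined with the Chebyshev bound $P_{t}(x,[-R,R]^{c})\le\EX V(X^{x}_{t})/R^{2}$ and the inequality $P_{t+T}(x,B(z,\epsilon))\ge P_{t}(x,[-R,R])\cdot p$, this gives $\inf_{x\in\mathbb{R}}\liminf_{t\to\infty}P_{t}(x,B(z,\epsilon))>0$.

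The main obstacle is eventual continuity at $z$. I will use the synchronous coupling driven by the same Poisson process $N_{t}$; the difference $D_{t}:=X^{x}_{t}-X^{z}_{t}$ then satisfies
\begin{align*}
\dot{D}_{t}=D_{t}\bigl[a-b\bigl((X^{x}_{t})^{2}+X^{x}_{t}X^{z}_{t}+(X^{z}_{t})^{2}\bigr)\bigr]
\end{align*}
between jumps---a contraction of rate $\approx -2a$ whenever both processes lie near $z$---and $|D_{t}|\le(1+L_{\sigma})|D_{t-}|$ at each jump. For $x$ close to $z$, both trajectories remain in the positive basin of attraction of $z$, so the ODE contracts sharply. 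The delicate point is that a naive bound pits $\EX(1+L_{\sigma})^{N_{t}}=e^{L_{\sigma}t}$ against the drift contraction, which is favourable only when $L_{\sigma}<2a$. Since Definition \ref{def1} only requires that the iterated $\limsup$ vanish (rather than uniform-in-$t$ smallness), one can work in a suitably weighted or concave Wasserstein metric (in which jumps expand by a factor strictly less than $1+L_{\sigma}$) and combine with the Lyapunov localization to show $\limsup_{t\to\infty}\EX[|D_{t}|\wedge 1]\to 0$ as $x\to z$, giving the required eventual continuity. Proposition \ref{pro1} then yields the claimed asymptotic stability.
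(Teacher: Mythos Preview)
Your overall strategy matches the paper's: establish a Lyapunov bound, prove irreducibility toward $z=\sqrt{a/b}$ via controllability (no-jump events for positive initial data, finitely many positive jumps to escape the nonpositive half-line), and verify eventual continuity at $z$; then invoke the abstract criterion. The first two ingredients are essentially the paper's Steps~2 and~3, and your application of Proposition~\ref{pro1} in place of Corollary~\ref{col1} is harmless.

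The genuine gap is in the eventual continuity argument. Under synchronous coupling the difference contracts between jumps at rate roughly $2a$ near $z$, while each jump multiplies $|D_t|$ by a factor up to $1+L_\sigma$; since $\EX(1+L_\sigma)^{N_t}=e^{L_\sigma t}$, the bound closes only when $L_\sigma<2a$, which is not assumed. Your proposed rescue via a concave metric does not help: with $\rho(d)=d^{\gamma}$ the drift contraction becomes $2a\gamma$ while the jump contribution is $\EX(1+L_\sigma)^{\gamma N_t}=e^{((1+L_\sigma)^{\gamma}-1)t}\approx e^{\gamma\log(1+L_\sigma)t}$ for small $\gamma$, so the requirement relaxes only to $\log(1+L_\sigma)<2a$, still an unassumed constraint. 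Truncated metrics $d\wedge 1$ fare no better, since for small $|D_{t-}|$ the jump still expands by the full factor $1+L_\sigma$. The tradeoff between drift contraction and Lipschitz jump expansion is intrinsic to the synchronous coupling, and no choice of metric removes it.

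The paper's device is an \emph{asymptotic} coupling rather than a synchronous one: it introduces an auxiliary process $\tilde X^y_t$ solving the same SDE with the same Poisson driver but with an additional relaxation drift $\lambda(X^x_t-\tilde X^y_t)\,\dif t$. Choosing the free parameter $\lambda>a+L_\sigma+L_\sigma^2/2$ forces $\EX|X^x_t-\tilde X^y_t|^2\to 0$ regardless of the size of $L_\sigma$. A second estimate---using that for $x,y$ near $z$ all three processes stay bounded below by a positive constant, so the cubic term provides genuine dissipation---shows $\EX|\tilde X^y_t-X^y_t|\to 0$. This two-step comparison, with the tunable feedback $\lambda$ decoupling the argument from any relation between $a$ and $L_\sigma$, is the missing idea in your proposal.
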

	\begin{proof}
		Fix any $x,y\in \mathbb{R}$, take $X_t^x, X_t^y$ solving \eqref{pola} with initial datum $x,y$. We divide the proof into three steps to prove that $\{P_t\}_{t\geq 0}$ is asymptotically stable.\\
		\textbf{Step 1.} We prove the local eventual continuity of $\{P_t\}_{t\geq 0}$. To this end, we define $\tilde{X}_t^y$ as the solution to the following equation with the initial condition $y$:
		\begin{equation}
			\dif \tilde{X}_t^y=a\tilde{X}_t^y-b\cdot (\tilde{X}_t^y)^3\  \dif t+ \lambda (X_t^x-\tilde{X}_t^y) \dif t + \sigma(\tilde{X}_{t-}^y)\dif N_t,\ \ \ \tilde{X}_0^y=y, \nonumber
		\end{equation}
		where $\lambda$ is a positive constant that will be given later.
		Let $Z_t=X_t^x-\tilde{X}_t^y$ and $A_t= (\tilde{X}_t^y)^2+ \tilde{X}_t^yX_t^x + (X_t^x)^2$, we have
		\begin{align*}
			\dif Z_t=(a-\lambda)Z_t\dif t -bA_tZ_t\dif t+[\sigma(X_{t-}^x)-\sigma(\tilde{X}_{t-}^y)]\ \dif N_t, \ \ \  Z_0=x-y.
		\end{align*}
		By using It\^{o} formula,
		\begin{align*}
			\dif (Z_t)^2=2(a-\lambda-bA_t)(Z_t)^2 \dif t+ \big[(\sigma(X_{t-}^x)-\sigma(\tilde{X}_{t-}^y))^2+2Z_t(\sigma(X_{t-}^x)-\sigma(\tilde{X}_{t-}^y))\big]\ \dif N_t.
		\end{align*}
		Let $\lambda> (a+L_{\sigma})+\frac{L_{\sigma}^2}2$, by $A_t\geq 0$ and comparison principle,
		\begin{align}\label{ineq1}
			\EX|Z_t|^2\le |x-y|^2e^{(2a-2\lambda+2L_{\sigma}+L_{\sigma}^2)t}\rightarrow 0,\ \ \  \text{as}\ \ \ t\rightarrow\infty,\ \ \ x\rightarrow y.
		\end{align}
		Let $\tilde{Z}_t=\tilde{X}_t^y- X_t^y$ and $\tilde{A}_t=(\tilde{X}_t^y)^2+ \tilde{X}_t^yX_t^y + (X_t^y)^2$, we have
		\begin{align*}
			\dif \tilde{Z}_t=(a-b\tilde{A}_t)\tilde{Z}_t\dif t + \lambda Z_t \dif t+[\sigma(\tilde{X}_{t-}^y)-\sigma(X_{t-}^y)] \ \dif N_t, \ \ \ \tilde{Z}_0=0.
		\end{align*}
		It is easy to check that
		\begin{align*}
			\tilde{Z}_t=\lambda e^{\int_0^t (a-b\tilde{A_s})\dif s}\int_0^t e^{\int_0^s (b\tilde{A}_r-a) \dif r} Z_s \dif s+ e^{\int_0^t (a-b\tilde{A_s})\dif s}\int_0^t[ \sigma(\tilde{X}_{s-}^y)-\sigma(X_{s-}^y)] \ \dif N_s.
		\end{align*}
		Let $y\geq\sqrt\frac ab, x\geq \sqrt\frac{a}{2b}$, we have
		\begin{align*}
			X_t^x\geq\sqrt\frac{a}{2b},\ \ \  X_t^y\geq\sqrt\frac ab,\ \ \ \tilde{X}_t^y\geq \tilde{Y}_t^y\geq p\land \sqrt\frac ab.
		\end{align*}
		where $p$ is a stable equilibrium point of the following equation
		\begin{align*}
			\dif \tilde{Y}^y_t=[(a-\lambda)\tilde{Y}^y_t-b\cdot (\tilde{Y}_t^y)^3+(a+L_{\sigma}+\frac{L_{\sigma}^2}2)\sqrt\frac{a}{2b}] \dif t,\ \ \  \tilde{Y}^y_0=y.
		\end{align*}
		This implies that $\tilde{A}_t\geq\frac ab + q^2$, $q=p\land\sqrt\frac ab $. Then, by \eqref{ineq1}, $\tilde{A}_t\geq\frac ab + q^2$ and  $0<\sigma\le M$,
		\begin{align}\label{ineq2}
			\EX|\tilde{Z}_t|\le& \lambda\EX| e^{\int_0^t (a-b\tilde{A_s})\dif s}\int_0^t e^{\int_0^s (b\tilde{A}_r-a) \dif r} Z_s \dif s|+\EX\bigg[e^{\int_0^t (a-b\tilde{A_s})\dif s}\int_0^t  |\sigma(\tilde{X}_{s-}^y)-\sigma(X_{s-}^y)| \ \dif N_s\bigg]\nonumber\\
			\le & \frac{2\lambda|x-y|(1-e^\frac{(2a-2\lambda+2L_{\sigma}+L_{\sigma}^2)t}{2})}{2\lambda-2a-2L_{\sigma}-L_{\sigma}^2} +2Mte^{-bq^2t} \rightarrow 0, \ \ \ \text{as} \ \ \ t\rightarrow \infty, \ \ \ x\rightarrow y.
		\end{align}
		Then, if $\lambda> (a+L_{\sigma})+\frac{L_{\sigma}^2}2$, $x\geq \sqrt\frac{a}{2b} $ and $ y\geq\sqrt\frac ab$, by \eqref{ineq1} and \eqref{ineq2} for any $f\in L_b(\mathbb{R})$ we can obtain,
		\begin{align*}
			|\EX f(X_t^x)-\EX f(X_t^y)|\le& \EX|f(X_t^x)- f(\tilde{X}_t^y)| + \EX |f(X_t^y)- f(\tilde{X}_t^y)|\\
			\le& L_f(\EX|Z_t|+\EX |\tilde{Z}_t|)\rightarrow 0,
		\end{align*}
		as $t\rightarrow\infty$, and $x\rightarrow y$, where $L_f$ is Lipschitz constant of $f$. This implies that $\{P_t\}_{t\geq 0}$ is eventually continuous at $y$. However, it is not straightforward to demonstrate that $\{P_t\}_{t\geq 0}$ is eventually continuous on $(-\infty, \sqrt\frac ab)$ using the methods previously discussed.
		
		\textbf{Step 2.} Fixed $y=  \sqrt\frac ab $ we prove that for any $x\in \mathbb{R}$ there exist $\gamma, C>0$ such that
		\begin{align}\label{estipossion}
			\EX|X_t^x-\sqrt\frac ab|^2\le C|x-\sqrt\frac ab|^2e^{-\gamma t}+C.
		\end{align}
		By It\^{o} formula we have,
		\begin{align*}
			\dif |X_t-\sqrt\frac ab|^2=2(X_t-\sqrt\frac ab)(aX_t-bX_t^3)\dif t +\big[2(X_t-\sqrt\frac ab)\sigma(X_{s-})+|\sigma(X_{s-})|^2\big] \dif N_t.
		\end{align*}
		Now integrate with respect to $t$ and take expectations to obtain,
		\begin{align*}
			\EX|X_t-\sqrt\frac ab|^2=&|x-\sqrt\frac ab|^2+ \int_0^t\EX\bigg[ 2(X_s-\sqrt\frac ab)(aX_s-bX_s^3)+2(X_s-\sqrt\frac ab)\sigma(X_s)+|\sigma(X_s)|^2 \bigg]\dif s\\
			\le&|x-\sqrt\frac ab|^2+\int_0^t\EX\bigg[-2bX_s^4+2\sqrt{ab}X_s^3+(2a+1)X_s^2-2\sqrt\frac {a^3}bX_s+3M^2+ \frac ab \bigg]\dif s\\
			\le&|x-\sqrt\frac ab|^2+\int_0^t\EX\bigg[-bX_s^4+(3a+2)X_s^2+3M^2+ \frac ab+\frac {a^3}b \bigg]\dif s.
		\end{align*}
		It is easy to check that
		\begin{align*}
			&\EX|X_t-\sqrt\frac ab|^2+\int_0^t \EX|X_s-\sqrt\frac ab|^2 \dif s\\
			\le&|x-\sqrt\frac ab|^2+\int_0^t\EX\bigg[-bX_s^4+(3a+4)X_s^2+3M^2+\frac{3a}b+\frac {a^3}b \bigg] \dif s\\
			\le &|x-\sqrt\frac ab|^2+\bigg[\frac{(3a+4)^2}{4b}+3M^2+\frac{3a}b+\frac {a^3}b\bigg]t.
		\end{align*}
		Using Gronwall inequality, there exists $C=C(a,b,M)>0$ such that
		\begin{align*}
			\EX|X_t-\sqrt\frac ab|^2\le C|x-\sqrt\frac ab|^2e^{-t}+C.
		\end{align*}
		Additionally, by using Markov-Chebyshev inequality, for any $r>0$
		\begin{align*}
			P_t(x, B(\sqrt\frac ab,r))=\EX\mathbf{1}_{|X_t^x-\sqrt\frac ab|<r}=1-\PX(|X_t^x-\sqrt\frac ab|\geq r)&\geq 1-\dfrac{\EX[|X_t^x-\sqrt\frac ab|]^2}{r^2}\\
			&\geq 1-\dfrac{Ce^{-t}|x-\sqrt\frac ab|^2}{r^2}-\dfrac{C}{r^2}.
		\end{align*}
		Then, there exists sufficiently $r_0>0$  such that
		\begin{align}\label{jg1}
			\liminf_{t\rightarrow \infty} P_t(x, B(\sqrt\frac ab,r))\geq \frac12, \ \ \ \forall r>r_0.
		\end{align}
		
		\textbf{Step 3.} We prove for any $\epsilon>0$ and sufficiently large $r>0$ (the lower bound for the value of $r$ will be given in the proof below) there exists $T>0$ such that
		\begin{align*}
			\inf_{x\in B(\sqrt\frac ab,r)} P_T(x, B(\sqrt\frac ab,\epsilon))>0.
		\end{align*}
		
		We define
		\begin{align*}
			\tau_0=0, \ \ \ \text{and}\ \ \ \tau_n:=\inf\{t\geq 0; N_t-N_{\tau_{n-1}}\neq 0 \},\ \ \ \forall n\geq 1.
		\end{align*}
		Consider the following equation:
		\begin{align}
			\dif Y_t= (aY_t- b Y_t^3) \dif t, \ \ \ Y_0=x\in\mathbb{R}.
		\end{align}
		Take $Y_t^x$ solving this equation with initial datum $x$.
		Let $\tilde{\delta}>0$ satisfying $0<\tilde{\delta}<\min\{ \tfrac m3, |\sqrt\frac ab-m|, \sqrt\frac ab\}$. We prove that there exists $T>0$ such  that the infimum of $P_{T}(x,B(\sqrt\frac ab,\epsilon))$  is positive for $x$ in each of the intervals $[\tilde{\delta}, \sqrt\frac ab+r], [-\tilde{\delta}, \tilde{\delta}] $ and $ [\sqrt\frac ab-r, -\tilde\delta ]$. The proof is divided into the following three cases:\\
		\textbf{Case 1:} $\inf\limits_{x\in [\tilde{\delta}, \sqrt\frac ab+r]} P_{T}(x,B(\sqrt\frac ab,\epsilon))>0$\\
		For any $\epsilon>0$ and sufficiently large $r>0$ there exists $T_1>0$ such that
		\begin{align}\label{in1}
			|Y_t^x-\sqrt\frac ab|<\epsilon,\ \ \ \forall x\in[\tilde{\delta}, \sqrt\frac ab+r],\ \ \ \forall t\geq T_1.
		\end{align}
		Since $\tau_1$ has exponential distribution with mean $1$, for any $t\geq0$,
		\begin{align*}
			\PX(\tau_1>t)=e^{-t}>0.
		\end{align*}
		Then, by \eqref{in1}, for such $r>0$, $\epsilon>0$ and $t\geq T_1$
		\begin{align}\label{irreeq1}
			\inf_{x\in [\tilde{\delta}, \sqrt\frac ab+r]}\PX(|X_{t}^x-\sqrt\frac ab|<\epsilon)
			\geq\PX(\tau_1>{t})=e^{-{t}} .
		\end{align}
		This implies that
		\begin{align}\label{jieguo1}
			\inf_{x\in [\tilde{\delta}, \sqrt\frac ab+r]} P_{t}(x,B(\sqrt\frac ab,\epsilon))>0, \ \ \ \forall t\geq T_1.
		\end{align}
		\textbf{Case 2.} $\inf\limits_{x\in [-\tilde{\delta}, \tilde{\delta}]} P_{T}(x,B(\sqrt\frac ab,\epsilon))>0$\\
		Since $Y_t^x$ is continuous with respect to the initial value $x$, there exists $t_{\tilde{\delta}}>0$ such that
		\begin{align*}
			|Y^x_{t_{\tilde{\delta}}}|<2\tilde{\delta},\ \ \  \forall x\in[-\tilde{\delta}, \tilde{\delta}].
		\end{align*}
		Let $\tau_1<t_{\tilde{\delta}}$, then we have for any $x\in [-\tilde{\delta}, \tilde{\delta}]$,
		\begin{align*}
			X_{\tau_1}^x=Y_{\tau_1}^x+\sigma(Y_{\tau_1}^x)\in B(\zeta_1, 2\tilde\delta),
		\end{align*}
		where $m<\zeta_1<M$, and $\tilde\delta<\frac m3$. It follows that $\tilde\delta<X_{\tau_1}^x<M+2\tilde\delta$. Without loss of generality, assume that $r>(M+2\tilde\delta)\lor r_0$. Then, we obtain that
		\begin{align*}
			|   X_t^x-\sqrt\frac ab |<\epsilon, \ \ \ \text{when}\ \ \ \tau_2>t>T_1+t_{\tilde\delta}, \ \ \ \tau_1<t_{\tilde\delta}.
		\end{align*}
		This implies that
		\begin{align}\label{jieguo2}
			\inf_{x\in [-\tilde{\delta}, \tilde{\delta}]} P_{t}(x,B(\sqrt\frac ab,\epsilon))\geq& \PX(\tau_2>t>T_1+t_{\tilde\delta},  \tau_1<t_{\tilde\delta})
			\geq\PX(\tau_2-\tau_1>t, \tau_1<t_{\tilde\delta})\nonumber\\
			=&\PX(\tau_2-\tau_1>t)\PX( \tau_1<t_{\tilde\delta})
			=(e^{-t})(1-e^{-t_{\tilde\delta} }), \ \ \ \forall t\geq T_1+t_{\tilde\delta}.
		\end{align}
		\textbf{Case 3.} $\inf\limits_{x\in[\sqrt\frac ab-r,-\tilde{\delta} ] }P_{T}(x, B(\sqrt\frac ab,\epsilon))>0$\\
		Let $n=[\frac{\sqrt\frac{4a}b}{m}]+1$, $0<\delta_0<\frac{\sqrt\frac ab -\tilde\delta}{n+1}$, there exists $T_2>0$ such that
		\begin{align*}
			|Y_t^x+\sqrt\frac ab|<\delta_0, \ \ \ \forall x\in[\sqrt\frac ab-r, -\tilde\delta ], \ \ \ \forall t\geq T_2.
		\end{align*}
		Let $\tau_1>T_2$, we have
		\begin{align*}
			-\sqrt\frac ab+m-\delta_0<X_{\tau_1}^x=Y_{\tau_1}^x+\sigma(Y_{\tau_1}^x)<-\sqrt\frac ab+M+\delta_0.
		\end{align*}
		There exists $t_1>0$ such that
		\begin{align*}
			-\sqrt\frac ab+m-2\delta_0< Y_{t_1+\tau_1}^{X_{\tau_1}^x}<-\sqrt\frac ab+M+2\delta_0
		\end{align*}
		Then, let $\tau_2-\tau_1<t_1$, we have
		\begin{align*}
			-\sqrt\frac ab+2m-2\delta_0<X_{\tau_2}^x=Y_{\tau_2}^{X_{\tau_1}^x}+\sigma(Y_{\tau_2}^{X_{\tau_1}^x})<-\sqrt\frac ab+2M+2\delta_0.
		\end{align*}
		There exists $t_2>0$ such that
		\begin{align*}
			-\sqrt\frac ab+2m-3\delta_0<Y_{t_2+\tau_2}^{X_{\tau_2}^x}<-\sqrt\frac ab+2M+3\delta_0.
		\end{align*}
		By induction, we can obtain there exists $t_{n-1}>0$ such that
		\begin{align*}
			-\sqrt\frac ab+(n-1)m-n\delta_0<    Y_{t_{n-1}+\tau_{n-1}}^{X_{\tau_{n-1}}^x}<-\sqrt\frac ab+(n-1)M+n\delta_0 .
		\end{align*}
		Then, let $\tau_n-\tau_{n-1}<t_{n-1}$, we have
		\begin{align*}
			-\sqrt\frac ab+nm-n\delta_0<X_{\tau_n}^x=Y_{\tau_n}^{X_{\tau_{n-1}}^x}+\sigma(Y_{\tau_{n}}^{X_{\tau_{n-1}}^x})<-\sqrt\frac ab+nM+n\delta_0.
		\end{align*}
		Without loss of generality, assume that $r>(-\sqrt\frac ab+nM+n\delta_0)\lor r_0$. This implies that
		\begin{align*}
			\tilde\delta<\xi_n-n\delta_0<   X_{\tau_n}^x< -\sqrt\frac ab+nM+n\delta_0<r.
		\end{align*}
		Then, let $\tau_{n+1}>t\geq T_1+T_2+2t_1+\cdots+t_{n-1}$, and $T_2<\tau_1<T_2+t_1$ we have
		\begin{align*}
			|X_{t}^x-\sqrt\frac ab| <\epsilon, \ \ \ \forall x\in[\sqrt\frac ab-r, -\tilde\delta ].
		\end{align*}
		It follows that $\forall t\geq T_1+T_2+2t_1+\cdots+t_{n-1}$,
		\begin{align}\label{jieguo3}
			&\inf_{x\in[\sqrt\frac ab-r, -\tilde\delta ]}P_{t}(x, B(\sqrt\frac ab,\epsilon))\nonumber\\
			\geq& \PX(T_2<\tau_1<T_2+t_1,\tau_2-\tau_1<t_1,\cdots, \tau_n-\tau_{n-1}<t_{n-1}, \tau_{n+1}>t)\nonumber\\
			\geq&\PX(T_2<\tau_1<T_2+t_1)\PX(\tau_2-\tau_1<t_1)\cdots\PX(\tau_n-\tau_{n-1}<t_{n-1})\times \nonumber\\
			&\PX(\tau_{n+1}-\tau_n>t+T_1+T_2+2t_1+t_2+\cdots+t_{n-1})\nonumber\\
			=&(e^{-T_2}-e^{-T_2-t_1})(1-e^{-t_1})\cdots(1-e^{-t_n})\times
			(e^{-t-T_1-T_2-2t_1-t_2-\cdots-t_{n-1}}).
		\end{align}
		By \eqref{jieguo1}, \eqref{jieguo2} and\eqref{jieguo3}, let $T=T_1+t_{\tilde\delta}+T_2+2t_1+t_2+\cdots+t_{n-1}$, then we have
		\begin{align*}
			\inf_{x\in[\sqrt\frac ab-r,\sqrt\frac ab+r ] }P_{T}(x, B(\sqrt\frac ab,\epsilon))>0,
		\end{align*}
		where $r>(M+2\tilde\delta)\lor(-\sqrt\frac ab+nM+n\delta_0)\lor r_0$.
		Since $\{P_t\}_{t\geq 0}$ is eventually continuous at $\sqrt\frac ab$ and \eqref{estipossion} holds, we can conclude from Corollary \ref{col1} that $\{P_t\}_{t\geq 0}$ is asymptotically stable.\bigskip\qed\end{proof}
	
	\begin{remark}
		 Compared to the example in \cite[Section 5.1]{Gong3} or Example~\eqref{langevin} which is not asymptotically stable, our example \eqref{pola}, driven by a non-degenerate and non-Gaussian noise, is asymptotically stable. We use eventual continuity at a single point to establish its asymptotic stability. This example can be shown to be globally eventually continuous based on the equivalent conditions in \cite[Theorem 3.1]{Gong2}. However, it appears challenging to prove global eventual continuity directly.  
	\end{remark}

	\section{Proofs}\label{proofs}
	In this section, we provide the proofs corresponding to the theorems and corollary.
	
	\subsection{Proof of Theorem \ref{thm0}}
	We observe that the semigroup satisfies eventual continuity for all bounded measurable functions, and therefore Theorem \ref{thm0} holds. Although the proof follows arguments similar to those in \cite[Theorem 3.16]{Gong3} and \cite[Theorem 3.3]{Feller}, we provide a proof here for completeness.
	
	\emph{Proof of Theorem \ref{thm0}.} $(1) \Rightarrow (2):$ Let $\mu$ be a unique invariant measure for $\{P_t\}_{t\geq 0}$, and $z\in \text{supp}\ \mu$, for all $x\in E$
	\begin{align*}
		\liminf_{t\rightarrow\infty} P_t(x, B(z,\epsilon))\geq\mu(B(z,\epsilon))>0
	\end{align*}
	and for every $f\in B_b(E)$,
	\begin{align*}
		&\limsup_{x\rightarrow z}\limsup_{t\rightarrow \infty}\sup_{||f||_{\infty}=1}|P_tf(x)-P_tf(z)|\\
		\le& \limsup_{x\rightarrow z}\limsup_{t\rightarrow \infty}\sup_{||f||_{\infty}=1}\bigg[|\int_E P_tf(y) \delta_x(\dif y)-\langle \mu, f \rangle|+|\int_E P_tf(y) \delta_z(\dif y)-\langle \mu, f \rangle|\bigg]\\
		=&\limsup_{x\rightarrow z}\limsup_{t\rightarrow \infty}\bigg(||P_t^*\delta_x-\mu||_{TV}+||P_t^*\delta_y-\mu||_{TV}\bigg)=0.
	\end{align*}

	The proof of $(2) \Rightarrow (1)$ is divided into three steps. 
	
	\textbf{Step 1.} Due to \cite[Theorem 3.16]{Gong3}, we know that $\{P_t(z, \cdot)\}_{t\geq 0}$ is tight. Consequently, there exists an invariant measure.

	\textbf{Step 2.} We are going to show that $\lim\limits_{t\rightarrow\infty}\sup\limits_{||f||_{\infty}= 1}|P_tf(x_1)-P_tf(x_2)|=0$ for all $x_1,x_2\in E$. As $\{P_t\}_{t\geq 0}$ is $TV$-eventually continuous at $z$, there is $\delta=\delta(\epsilon)>0$ such that
	\begin{align*}
		\limsup_{t\rightarrow\infty}\sup_{||f||_{\infty}= 1}|P_tf(x)-P_tf(z)|<\frac\epsilon4,\ \ \forall x\in B(z,\delta).
	\end{align*}
	For such $\delta>0$, \eqref{C4} gives some $\alpha\in(0,\frac12)$ such that $\liminf\limits_{t\rightarrow\infty}P_t(x, B(z,\frac\delta2))>\alpha$ for all $x\in E$. Then by Fatou's lemma, for any $\mu\in \mathcal{M}(E)$ we have
	\begin{align}\label{1-1}
		\liminf_{t\rightarrow\infty}P_t^*\mu(B(z,\frac\delta2))\geq&\int_E\liminf_{t\rightarrow\infty}P_t(y, B(z,\frac\delta2))\mu(\dif y)>\alpha.
	\end{align}
	Fixed $x_1, x_2\in E$, by induction we can define four sequences of probability measures $\{\nu_i^{x_1}\}_{i\geq1}$, $\{\mu_i^{x_1}\}_{i\geq1}$, $\{\nu_i^{x_2}\}_{i\geq1}$, $\{\mu_i^{x_2}\}_{i\geq1} $ and a sequence of positive numbers $\{t_i\}_{i\geq1}$ such that ${\text{supp}}\ \nu_i^{x_j}\subset B(z,\delta)$, ${\text{supp}}\ \mu_i^{x_j}\subset E$ and
	\begin{align*}
		P_{t_{i}}^*\mu_{i-1}^{x_j}=\alpha\nu_{i}^{x_j}+(1-\alpha)\mu_{i}^{x_j}, \ \ j=1,2 \ \ {\text{and}} \ \ i\geq 1.
	\end{align*}
	Indeed, set $\mu_0^{x_j}=\delta_{x_j}$,\ $j=1,2$, and $t_0=0$. By \eqref{1-1}, we choose $t_1>t_0$ be such that
	\begin{align*}
		P_{t}^*\mu_{0}^{x_j}(B(z,\frac\delta2))>\alpha, \ \ \forall j=1,2 \ \ \ \text{and}\ \ \ t\geq t_1.
	\end{align*}
	Let
	\begin{align*}
		\nu_{1}^{x_j}(\cdot)=\dfrac{P_{ t_{1}}^*\mu_{0}^{x_j}(\cdot\cap B(z,\frac\delta2))}{P_{ t_{1}}^*\mu_{0}^{x_j}(B(z,\frac\delta2))},\ \ \mu_{1}^{x_j}(\cdot)=\frac{1}{1-\alpha}(P_{ t_{1}}^*\mu_{0}^{x_j}(\cdot)-\alpha\nu_{1}^{x_j}(\cdot)),\  j=1,2.
	\end{align*}
	Assume that we have done it for $i=1,\cdots, k-1$. Now by \eqref{1-1}, let $t_k> t_{k-1}$ be such that
	\begin{align*}
		P_{t}^*\mu_{k}^{x_j}(B(z,\frac\delta2))>\alpha, \ \ \forall j=1,2 \ \ {\text{and}} \ \ t\geq t_k.
	\end{align*}
	Let
	\begin{align*}
		\nu_{k}^{x_j}(\cdot)=\dfrac{P_{ t_{k}}^*\mu_{k-1}^{x_j}(\cdot\cap B(z,\frac\delta2))}{P_{ t_{k}}^*\mu_{k-1}^{x_j}(B(z,\frac\delta2))},\ \ \mu_{k}^{x_j}(\cdot)=\frac{1}{1-\alpha}(P_{ t_{k}}^*\mu_{k-1}^{x_j}(\cdot)-\alpha\nu_{k}^{x_j}(\cdot)),\  j=1,2.
	\end{align*}
	Then it follows that
	\begin{align*}
		P_{t_1+\cdots+t_k}^*\delta_{x_j}(\cdot)=&\alpha P_{t_2+\cdots+t_k}^*\nu_1^{x_j}(\cdot)+\alpha(1-\alpha)P_{t_3+\cdots+t_k}^*\nu_2^{x_j}(\cdot)+\cdots\\
		&+\alpha(1-\alpha)^{k-2}P_{t_k}^*\nu_{k-1}^{x_j}(\cdot)+\alpha(1-\alpha)^{k-1}\nu_k^{x_j}(\cdot)+(1-\alpha)^k\mu_k^{x_j}(\cdot),
	\end{align*}
	where $\text{supp}\ \nu_i^{x_j}\subset \overline{B(z,\frac\delta2)}\subset B(z,\delta), i=1,\cdots ,k, j=1,2$. Thus from Fatou's lemma and $TV$-eventually continuous at $z$, we have
	\begin{align*}
		&\limsup_{t\rightarrow\infty}\sup_{||f||_{\infty}= 1}|\langle P_{t}f, \nu_i^{x_1}\rangle-\langle P_{t}f, \nu_i^{x_2}\rangle\\
		\le&\limsup_{t\rightarrow\infty}\sup_{||f||_{\infty}= 1}\bigg[|\langle P_{t}f-P_{t}f(z), \nu_i^{x_1}\rangle|+|\langle P_{t}f-P_{t}f(z), \nu_i^{x_2}\rangle |\bigg]\le \frac\epsilon2.
	\end{align*}
	Finally, using the measure decomposition gives
	\begin{align*}
		& \limsup_{t\rightarrow \infty}\sup_{||f||_{\infty}= 1}|P_{t}f(x_1)-P_{t}f(x_2)|\\
		=&\limsup_{t\rightarrow \infty}\sup_{||f||_{\infty}= 1}|\langle P_{t}f, P_{t_1+\cdots+t_k}^*\delta_{x_1}\rangle- \langle P_{t}f, P_{t_1+\cdots+t_k}^*\delta_{x_2}\rangle|\\
		\le& \limsup_{t\rightarrow \infty}\sup_{||f||_{\infty}= 1}\bigg(\alpha|\langle P_{t}f, \nu_1^{x_1} \rangle- \langle P_{t}f, \nu_1^{x_2} \rangle|+\alpha(1-\alpha)|\langle P_{t}f, \nu_2^{x_1} \rangle- \langle P_{t}f, \nu_2^{x_2} \rangle |\\
		&+\cdots+\alpha(1-\alpha)^{k-1}|\langle P_{t}f,\nu_k^{x_1} \rangle- \langle P_{t}f, \nu_k^{x_2} \rangle |+(1-\alpha)^k|\langle P_{t}f,\mu_k^{x_1} \rangle- \langle P_{t}f, \mu_k^{x_2} \rangle|\bigg)\\
		\le&(\alpha+\cdots+\alpha(1-\alpha)^{k-1})\frac\epsilon2+2(1-\alpha)^k<\epsilon,
	\end{align*}
	in the last inequality we can choose sufficiently large $k\in \mathbb{N}$ such that
	\begin{align*}
		2(1-\alpha)^k<\frac\epsilon2.
	\end{align*}
	 Since $\epsilon$ was arbitrary, we obtain
	 \begin{align*}
	 \lim_{t\rightarrow\infty}\sup_{||f||_{\infty}= 1}|P_tf(x_1)-P_tf(x_2)|=0.	
	 \end{align*}

	\textbf{Step 3.}   Let $\mu$ is an invariant measure for $\{P_t\}_{t\geq 0}$.   For any $\nu\in\mathcal{M}(E)$, and $x_2\in E$
	\begin{align*}
		&| \int_{E}f(x_1)P_t^*\nu(\dif x_1)-\int_E f(x_1) \mu(\dif x_1)|=| \int_{E}P_tf(x_1)\nu(\dif x_1)-\int_E P_tf(x_1) \mu(\dif x_1)|\\
		\le&  \int_{E}|P_tf(x_1)-P_tf(x_2)|\nu(\dif x_1)+\int_E |P_tf(x_1)- P_tf(x_2)| \mu(\dif x_1),
	\end{align*}
	using the dominated convergence theorem and the face in \textbf{Step 2} we obtain
	\begin{align*}
		\lim_{t\rightarrow \infty}\sup_{||f||_{\infty}= 1}\bigg(\int_{E}|P_tf(x_1)-P_tf(x_2)|\nu(\dif x_1)+\int_E |P_tf(x_1)- P_tf(x_2)|\bigg)=0.
	\end{align*}
	This implies \eqref{TV}. The proof is complete.\qed\bigskip

	\subsection{Proof of Theorem~\ref{thm2}}
	\emph{Proof of Theorem \ref{thm2}.} $(1) \Rightarrow (2):$ Let $\mu$ be a unique invariant measure for $\{P_t\}_{t\geq 0}$, and $z\in \text{supp}\ \mu$, since $\{P_t\}_{t\geq 0}$ is asymptotically stable, from \cite[Theorem 2.1]{Bill},  then for all $x\in E$ and $\epsilon>0$
	\begin{align*}
		\liminf_{t\rightarrow\infty} P_t(z, B(z,\epsilon))\geq\mu(B(z,\epsilon))>0.
	\end{align*}
	Since $\mathcal{T}=E$, it follows that for all $x\in E$ and $\epsilon>0$
	\begin{align*}
		\limsup_{t\rightarrow\infty} Q_t(x, B(z,\epsilon))\geq\mu(B(z,\epsilon))>0,
	\end{align*}
	and for every $f\in L_b(E)$, $y\in E$
	\begin{align*}
		&\limsup_{x\rightarrow y}\limsup_{t\rightarrow \infty}|P_tf(x)-P_tf(y)|\\
		\le& \limsup_{x\rightarrow y}\limsup_{t\rightarrow \infty}\bigg[|\int_E P_tf(\xi) \delta_x(\dif \xi)-\langle \mu, f \rangle|+|\int_E P_tf(\xi) \delta_y(\dif \xi)-\langle \mu, f \rangle|\bigg]=0.
	\end{align*}
	
	$(2)\Rightarrow (1):$ Firstly, since $\mathcal{T}=E$, we can infer the existence of an invariant measure for $\{P_t\}_{t\geq 0}$. For the remainder of the proof, we divide it into three steps.

	\textbf{Step 1.} We first show that fix $0<\epsilon<1$, we have
	\begin{align}\label{QC0}
		\liminf_{t\rightarrow\infty} Q_t(x, B(z,\epsilon))>0, \ \ \forall x\in E.
	\end{align}

	From \cite[Corollary 4.8]{decom}, we have an ergodic measure $\mu_*$ for $\{P_t\}_{t\geq 0}$. By using \cite[Theorem 4.4]{decom} (or \cite[Theorem 4.2]{decom2} ), we have $Q_t^*\delta_z$ converges weakly to $\mu_*$ as $t\rightarrow \infty$. And from \cite[Theorem 2.1]{Bill},
	\begin{align*}
		0<\limsup_{t\rightarrow \infty}Q_t(z, \overline{B(z,\frac\epsilon4)})\le\mu_*
		(\overline{B(z,\frac\epsilon4)})\le \mu_*(B(z,\frac\epsilon2)).
	\end{align*}
	It is easy to obtain
	\begin{align*}
		\alpha:=\liminf_{t\rightarrow \infty}Q_t(z,B(z,\frac\epsilon2))\geq \mu_*(B(z,\frac\epsilon2))>0.
	\end{align*}
	Let $f(x)=(1-\frac1\epsilon d(x,z))\lor 0$, then $||f||_{\infty}=1$, and  $ L_f \le \frac1\epsilon$, where $L_f$ is Lipschitz constant of $f$. Moreover, $\frac12 \mathbf{1}_{B(z,\frac\epsilon2)}\le f\le\mathbf{1}_{B(z,\epsilon)}$.
	Since $\{P_t\}_{t\geq 0}$ is eventually continuous at $z$, it follows that $\{Q_t\}_{t\geq 0}$ is eventually continuous at $z$ (see \cite{Gong1}). Then, we may choose $\delta>0$, for any $y\in B(z,\delta)$,
	\begin{align*}
		\limsup_{t\rightarrow \infty}|Q_tf(y)-Q_tf(z)|<\frac\alpha4.
	\end{align*}
	Then, for any $y\in B(z,\delta)$
	\begin{align}\label{Q0}
		\liminf_{t\rightarrow\infty}Q_t(y, B(z,\epsilon))>\frac12\liminf_{t\rightarrow\infty}Q_t(z, B(z,\frac\epsilon2))-\limsup_{t\rightarrow \infty}|Q_tf(y)-Q_tf(z)|\geq \frac\alpha4.
	\end{align}
	For any $x\in E$, from \eqref{C1} we know that there exists a sufficiently large $t_0>0$ such that
	\begin{align*}
		Q_{t_0}(x, B(z,\delta))>0.
	\end{align*}
	Then, by using Fatou's lemma for any $x\in E$
	\begin{align*}
		\liminf_{t\rightarrow\infty}Q_t(x, B(z,\epsilon))=\liminf_{t\rightarrow\infty}Q_{t+t_0}(x, B(z,\epsilon))\geq\int_{B(z,\delta)}\liminf_{t\rightarrow\infty}Q_t(y,B(z,\epsilon)) Q_{t_0}(x,\dif y)>0.
	\end{align*}
	
	\textbf{Step 2.} Now, we show for any $\nu\in \mathcal{M}(E)$, we have $Q_t^*\nu$ converges weakly to $\mu_*$, and $\mu_*$ is a unique invariant measure for $\{P_t\}_{t\geq 0}$. This means that $\{P_t\}_{t\geq 0}$ is weak-* mean ergodic(see \cite[Denfinition 2.2]{epro}).     We only need to prove for any $x,y\in E$ and $f\in L_b(E)$,
	\begin{align}\label{s20}
		\limsup_{t\rightarrow \infty}|Q_tf(x)-Q_tf(y)|=0.
	\end{align}
	
	Set $\{\gamma_n\}_{n\in\mathbb{N}^+}$ be a sequence of positive numbers monotonically decreasing to $0$. And fix arbitrary $\epsilon>0, f \in L_{b}(E), x_{1}, x_{2} \in E$. We define $\mathcal{D} \subset$ $\mathbb{R}$ in the following way: $\lambda \in \mathcal{D}$ if and only if $\lambda>0$ and there exists a positive integer $N$, a sequence of times $\{T_{\lambda, n}\}_{n\in\mathbb{N}^+}$ and sequences of measures $\{\mu_{\lambda, i}^{n}\}_{n\in\mathbb{N}^+},\{\nu_{\lambda, i}^{n}\}_{n\in\mathbb{N}^+} \subset$ $\mathcal{M}(E), i=1,2$, such that for $n \geq N$,
	\begin{align}
		T_{\lambda, n} & \geq n , \label{s21} \\
		\left\|Q_{T_{\lambda, n}}\left(x_{i}, \cdot\right)-\mu_{\lambda, i}^{n}\right\|_{\mathrm{TV}} & <\gamma_n , \label{s22} \\
		\mu_{\lambda, i}^{n} & \geq \lambda v_{\lambda, i}^{n} \quad \text { for } i=1,2, \label{s23}
	\end{align}
	and
	\begin{equation}
		\limsup _{T \rightarrow \infty}\left|\int_{E} f(x) Q_T^* v_{\lambda, 1}^{n}(\dif  x)-\int_{E} f(x) Q_T^* v_{\lambda, 2}^{n}(\dif  x)\right|<\epsilon \label{s24}.
	\end{equation}
	Let us state the following claim whose proof is postponed to Section \ref{appendix}.\\
	\textbf{Claim:}:    For given $\epsilon>0,\{\gamma_n\}_{n\in\mathbb{N}^+}, x_{1}, x_{2} \in E$ and $f \in L_b(E)$, the set $\mathcal{D} \neq \varnothing$. Moreover, we have $\sup \mathcal{D}=1$.

	Admitting this claim, it follows that for any $\epsilon>0$ there exists an $\lambda>1-\epsilon$ that belongs to $\mathcal{D}$, such that for any $T\geq T_{\lambda,n}$
	\begin{align}\label{s25}
		& \mid \int_{E} f(y) Q_T\left(x_{1}, \dif  y\right)-\int_{E} f(y) Q_T\left(x_{2}, \dif  y\right) \mid \nonumber \\
		\leq    & \sum_{i=1}^{2}\left|\int_{E} f(y) Q_T\left(x_{i}, \dif  y\right)-\int_{E} f(y) Q_{T, T_{\lambda, n}}\left(x_{i}, \dif  y\right)\right| \nonumber \\
		&+\left|\int_{E} f(y) Q_T^* \mu_{\lambda, 1}^{n}(\dif  y)-\int_{E} f(y) Q_T^* \mu_{\lambda, 2}^{n}(\dif  y)\right|\nonumber \\
		&+\sum_{i=1}^{2}\left|\int_{E} f(y) Q_{T, T_{\lambda, n}}\left(x_{i}, \dif  y\right)-\int_{E} f(y) Q_T^* \mu_{\lambda, i}^{n}(\dif  y)\right| \nonumber \\
		\leq &\sum_{i=1}^{2}\left|\int_{E} f(y) Q_T\left(x_{i}, \dif  y\right)-\int_{E} f(y) Q_{T, T_{\lambda, n}}\left(x_{i}, \dif  y\right)\right| \nonumber \\
		& +\left|\int_{E} f(y) Q_T^* \mu_{\lambda, 1}^{n}(\dif  y)-\int_{E} f(y) Q_T^* \mu_{\lambda, 2}^{n}(\dif  y)\right|
		+2 \gamma_n\|f\|_{\infty}.
	\end{align}
	We can use condition \eqref{s23} to handle the second term on the last right-hand side of \eqref{s25}, and then we can replace $\mu_{\lambda, i}^{n}$ by $v_{\lambda, i}^{n}$ and obtain
	\begin{align}\label{s26}
		& \left|\int_{E} f(y) Q_T^* \mu_{\lambda, 1}^{n}(\dif  y)-\int_{E} f(y) Q_T^* \mu_{\lambda, 2}^{n}(\dif  y)\right| \nonumber\\
		{\leq}& \lambda\left|\int_{E} f(y) Q_T^* v_{\lambda, 1}^{n}(\dif  y)-\int_{E} f(y) Q_T^* v_{\lambda, 2}^{n}(\dif  y)\right| +\sum_{i=1}^{2}\|f\|_{\infty}\left(\mu_{\lambda, i}^{n}-\lambda v_{\lambda, i}^{n}\right)(E) \nonumber\\
		\leq&\left|\int_{E} f(y) Q_T^* v_{\lambda, 1}^{n}(\dif  y)-\int_{E} f(y) Q_T^* v_{\lambda, 2}^{n}(\dif  y)\right|+2 \epsilon\|f\|_{\infty}.
	\end{align}
	In the last inequality, we have used the fact that $1-\lambda<\epsilon$. Summarizing, from \cite[Lemma 2]{epro}, \eqref{s25}, \eqref{s26} and \eqref{s24}, we obtain that
	\begin{align*}
		\underset{T \rightarrow \infty}{\limsup }\left|\int_{E} f(y) Q_T\left(x_{1}, \dif  y\right)-\int_{E} f(y) Q_T\left(x_{2}, \dif  y\right)\right| \leq 2 \gamma_n\|f\|_{\infty}+2 \epsilon\|f\|_{\infty}+\epsilon.
	\end{align*}
	Since $\epsilon>0$ and $n$ were arbitrarily chosen, we conclude that \eqref{s20} follows.

	\textbf{Step 3.} We are going to show that, for every $\epsilon>0$
	\begin{align}\label{C14}
		\inf_{x\in E}\liminf_{t\rightarrow\infty} P_t(x, B(z,\epsilon))>0.
	\end{align}
	Fixed $\epsilon>0$, from \eqref{C2}, we set $\theta:=\liminf\limits_{t\rightarrow\infty}P_t(z, B(z,\frac\epsilon2))>0$. Similarily, let $f(x)=(1-\frac1\epsilon d(x,z))\lor 0$, then $||f||_{\infty}=1$, $||f||_{Lip}\le \frac1\epsilon$. Moreover, $\frac12 \mathbf{1}_{B(z,\frac\epsilon2)}\le f\le\mathbf{1}_{B(z,\epsilon)}$. Since $\{P_t\}$ is eventually continuous at $z$, we may choose $\delta>0$, for any $y\in B(z,\delta)$,
	\begin{align*}
		\limsup_{t\rightarrow \infty}|P_tf(y)-P_tf(z)|<\frac\theta4.
	\end{align*}
	Then, for any $y\in B(z,\delta)$
	\begin{align}\label{P0}
		\liminf_{t\rightarrow\infty}P_t(y, B(z,\epsilon))>\frac12\liminf_{t\rightarrow\infty}P_t(z, B(z,\frac\epsilon2))-\limsup_{t\rightarrow \infty}|P_tf(y)-P_tf(z)|\geq \frac\theta4.
	\end{align}
	By \textbf{Step 2}, we know that
	\begin{align*}
		\liminf_{t\rightarrow\infty} Q_t(x,B(z,\delta))\geq\mu_*(B(z,\delta)):=\gamma>0.
	\end{align*}
	Hence by the definition of $Q_t(x,B(z,\delta))$, there exists $t_0>0$ such that
	\begin{align*}
		P_{t_0} (x, B(z,\delta))>\gamma>0.
	\end{align*}
	Consequently, using Fatou's lemma
	\begin{align*}
		\liminf_{t\rightarrow\infty}P_t(x,B(z,\epsilon))=&\liminf_{t\rightarrow\infty}P_{t+t_0}(x,B(z,\epsilon))\\
		\geq &\int_{B(z,\delta)}\liminf_{t\rightarrow\infty} P_t(y,B(z,\epsilon)) P_{t_0}(x,\dif y)>\frac{\theta\gamma}4>0.
	\end{align*}
	This implies that for any $\epsilon>0$,
	\begin{align*}
		\inf_{x\in E}\liminf_{t\rightarrow\infty} P_t(x, B(z,\epsilon))>0.
	\end{align*}

	Finally, using the same method as \textbf{Step 2} of the proof for Theorem \ref{thm0} we have for any $x,y\in E$ and $f\in L_b(E)$,
	\begin{align*}
		\limsup_{t\rightarrow \infty}|P_tf(x)-P_tf(y)|=0
	\end{align*}
	is holds.
	Thus, for any $\nu\in\mathcal{M}(E)$, $f\in L_b(E)$ and $y\in E$, using dominated convergence theorem we obtain
	\begin{align*}
		&| \int_{E}f(x)P_t^*\nu(\dif x)-\int_E f(x) \mu(\dif x)|=| \int_{E}P_tf(x)\nu(\dif x)-\int_E P_tf(x) \mu(\dif x)|\\
		\le&  \int_{E}|P_tf(x)-P_tf(y)|\nu(\dif x)+\int_E |P_tf(x)- P_tf(y)| \mu(\dif x)\rightarrow 0, \ \ \text{as} \ \ t\rightarrow \infty.
	\end{align*}
	From \cite[Proposition 2.2]{decom}, asymptotic stability holds. This proof is complete.\qed \bigskip
	
	\subsection{Proof of Corollary \ref{col1}}
	Although the proof follows similar arguments to \cite[Propposition 7.1]{phdth}, we provide the full proof here to highlight our relaxation of conditions.
	
	\emph{Proof of Corollary \ref{col1}.}   For any $r>0$, by using Markov-Chebyshev inequality
	\begin{align*}
		P_t(x, B(z,r))=\EX\mathbf{1}_{d(X_t^x,z)<r}=1-\PX(d(X_t^x, z)\geq r)&\geq 1-\dfrac{\EX[d(X_t^x,z)]^p}{r^p}\\
		&\geq 1-\dfrac{\rho_x(t)}{r^p}-\dfrac{b\mathbf{1}_{B(z,\kappa)}}{r^p}.
	\end{align*}
	Then, there exists a sufficiently large $r_0>0$ such that for all $r\geq r_0$
	\begin{align*}
		\liminf_{t\rightarrow\infty}P_t(x, B(z,r))>\frac12,\ \ \forall x\in E.
	\end{align*}
	For any $r>r_0$ and $\epsilon>0$,  there exists $T=T(\epsilon,r)$  such that
	\begin{align*}
		\inf_{y\in B(z,r)}P_T(y, B(z, \epsilon))>0.
	\end{align*}
	Hence we obtain for any $x\in E$
	\begin{align}\label{coline1}
		\liminf_{t\rightarrow\infty} P_t(x, B(z,\epsilon))=\liminf_{t\rightarrow\infty} P_{T+t}(x, B(z,\epsilon))=&\liminf_{t\rightarrow\infty}\int_E P_t(x, \dif y)P_T(y, B(z,\epsilon)) \nonumber\\
		\geq&\liminf_{t\rightarrow\infty}\int_{B(z, r)} P_t(x, \dif y)\inf_{y\in B(z,r)}P_T(y, B(z,\epsilon))\nonumber\\
		>& \frac12\inf_{y\in B(z,r)}P_T(y, B(z,\epsilon))
	\end{align}
	Combining \eqref{coline1} and eventually continuous at $z$, from Proposition \ref{pro1}, we know that $\{P_t\}_{t\geq 0}$ is asymptotically stable. \qed\bigskip

	\section{Appendix}\label{appendix}
	In this appendix, we present the following claim, which was used in the proof of Theorem \ref{thm2}. The proof of this claim refers to \cite[Proof of Lemma 3]{epro}.

	\noindent\textbf{Claim:}    For given $\epsilon>0,\{\gamma_n\}_{n\in\mathbb{N}^+}, x_{1}, x_{2} \in E$ and $f \in L_b(E)$, the set $\mathcal{D} \neq \varnothing$. Moreover, we have $\sup \mathcal{D}=1$.
	
	\begin{proof}
		First, we show that $\mathcal{D} \neq \varnothing$. Since $\{P_{t} \}_{t \geq 0}$ is eventually continuous at $z \in E$, then  there exist $\sigma>0$ and $t_z>0$ such that
		\begin{equation}\label{s27}
			\left|P_{t} f(z)-P_{t} f(y)\right|<\epsilon / 2 \quad \text { for } y \in B(z, \sigma) \text { and } t \geq t_z .
		\end{equation}
		By \eqref{QC0}, there exist $\zeta>0$ and $T_{0}>0$ such that
		\begin{equation}\label{s28}
			Q_{T}\left(x_{i}, B(z, \sigma)\right) \geq \zeta \quad \forall T \geq T_{0},\  i=1,2 .
		\end{equation}
		Let $\lambda:=\zeta$ and $T_{\lambda, n}=n+T_{0}$ for $n \in \mathbb{N}^+, \mu_{\lambda, i}^{n}(\cdot):=Q_{T_{\lambda, n}}\left(x_{i},\cdot \right)$ and $v_{\lambda, i}^{n}(\cdot):=$ $\mu_{\lambda, i}^{n}(\cdot \mid B(z, \sigma))$ for $i=1,2$ and $n \geq 1$. Note that
		$$\mu_{\lambda, i}^{n}(B(z, \sigma))= Q_{T_{\lambda, n}}\left(x_{i},B(z,\sigma) \right)>0.$$
		The measures $v_{\lambda, i}^{n}, i=1,2$, are supported in $B(z, \sigma)$ and, therefore, for all $t \geq t_z$, we have
		\begin{align*}
			& \left|\int_{E} f(x) P_{t}^{*} v_{\lambda, 1}^{n}(\dif  x)-\int_{E} f(x) P_{t}^{*} v_{\lambda, 2}^{n}(\dif  x)\right| =\left|\int_{E} P_{t} f(x) v_{\lambda, 1}^{n}(\dif  x)-\int_{E} P_{t} f(x) v_{\lambda, 2}^{n}(\dif  x)\right| \\
			\leq&\left|\int_{E}\left[P_{t} f(x)-P_{t} f(z)\right] v_{\lambda, 1}^{n}(\dif  x)\right| +\left|\int_{E}\left[P_{t} f(x)-P_{t} f(z)\right] v_{\lambda, 2}^{n}(\dif  x)\right| \stackrel{\eqref{s27}}{<} \epsilon.
		\end{align*}
		Hence,  for any $T> t_z \lor \dfrac{2||f||_{\infty}t_z}{\epsilon} $
		\begin{align}\label{ecQ}
			& \left|\int_{E} f(x) Q_{T}^{*} v_{\lambda, 1}^{n}(\dif  x)-\int_{E} f(x) Q_{T}^{*} v_{\lambda, 2}^{n}(\dif  x)\right| =\left|\frac1T\int_0^T\bigg[\int_{E} P_{t} f(x) v_{\lambda, 1}^{n}(\dif  x)-\int_{E} P_{t} f(x) v_{\lambda, 2}^{n}(\dif  x)\bigg]\dif t\right| \nonumber\\
			\le&\left|\frac1T\int_0^T\int_{E}\left[P_{t} f(x)-P_{t} f(z)\right] v_{\lambda, 1}^{n}(\dif  x)\dif t\right| +\left|\frac1T\int_0^T\int_{E}\left[P_{t} f(x)-P_{t} f(z)\right] v_{\lambda, 2}^{n}(\dif  x)\dif t\right|\nonumber\\
			\le& \left|\frac1T\int_0^{t_z}\int_{E}\left[P_{t} f(x)-P_{t} f(z)\right] v_{\lambda, 1}^{n}(\dif  x)\dif t\right|+\left|\frac1T\int_{t_z}^T\int_{E}\left[P_{t} f(x)-P_{t} f(z)\right] v_{\lambda, 1}^{n}(\dif  x)\dif t\right|\nonumber\\
			&+ \left|\frac1T\int_0^{t_z}\int_{E}\left[P_{t} f(x)-P_{t} f(z)\right] v_{\lambda, 2}^{n}(\dif  x)\dif t\right|+\left|\frac1T\int_{t_z}^T\int_{E}\left[P_{t} f(x)-P_{t} f(z)\right] v_{\lambda, 2}^{n}(\dif  x)\dif t\right|\nonumber\\
			\le & 2\bigg(\dfrac{2||f||_{\infty}t_z}T+ \epsilon\dfrac{T-t_z}{T}\bigg)<4\epsilon.
		\end{align}
		This implies that $\eqref{s24}$ is holds.  Clearly, conditions \eqref{s20}-\eqref{s23} are also satisfied. Thus, $\mathcal{D} \neq \varnothing$.
		
		Next, we show that $\sup \mathcal{D}=1$. By the definition of $\mathcal{D}$ and $\mathcal{D} \neq \varnothing$, we have $0<\sup \mathcal{D}\le 1$. Suppose there exists $0<\lambda_{0}:=$ $\sup \mathcal{D}<1$. Similar to the proof of Lemma 3 in \cite{epro}, we define a sequence $\{\lambda_n\}_{n\in\mathbb{N}^+}\subset \mathcal{D}$, such that $\lim\limits_{n\rightarrow} \lambda_n=\lambda_0$. Accordingly, we can set a sequence of times $\{T_{n}\}_{n\in\mathbb{N}^+}:=\{T_{\lambda_{n}, n}\}_{n\in\mathbb{N}^+}$ and sequences of measures $ \{\mu_{n, i}\}_{n\in\mathbb{N}^+}:=\{\mu_{\lambda_{n}, i}^{n}\}_{n\in\mathbb{N}^+}$, $\{v_{n, i}\}_{n\in\mathbb{N}^+}:=\{v_{\lambda_{n}, i}^{n}\}_{n\in\mathbb{N}^+}$ for $i=1,2$, satisfy \eqref{s21}-\eqref{s24}. And  $\{\mu_{n, i}\}_{n\in\mathbb{N}^+}$, $\{v_{n, i}\}_{n\in\mathbb{N}^+}$ are tight. Without loss of generality, we may assume that the sequences $\{\mu_{n, i}\}_{n\in\mathbb{N}^+},\{v_{n, i}\}_{n\in\mathbb{N}^+}, i=1,2$, are weakly convergent. The sequences
		\begin{equation}\label{s29}
			\bar{\mu}_{n, i}:=\mu_{n, i}-\lambda_{n} v_{n, i}, \quad n \geq 1
		\end{equation}
		are therefore also weakly convergent for $i=1,2$. The assumption that $\lambda_{0}<1$ implies that the respective limits are nonzero measures; we denote them by $\bar{\mu}_{i}$, $i=1,2$, correspondingly. Analogously to (3.20)-(3.21) in \cite{epro}, we may find $N \geq 1$ such that
		\begin{equation}\label{s212}
			\bar{\mu}_{n, i}\left(B\left(y_{i}, r\right)\right)>\frac{s_{0}}{2} \quad \text { and } \quad \lambda_{n}+ \frac{s_{0}\gamma}{8}>\lambda_{0} \ \ \text{for} \ \ n \geq N.
		\end{equation}
		Here, $y_{i} \in \operatorname{supp} \bar{\mu}_{i}, i=1,2$, exists $r>0$ such that
		\begin{equation}\label{s211}
			Q_{T}(y, B(z, \sigma)) \geq \frac\gamma2 \quad \text { for } y \in B\left(y_{i}, r\right) \text { and } i=1,2 ,
		\end{equation}
		and $s_{0}=\min \left\{\bar{\mu}_{1}\left(B\left(y_{1}, r\right)\right), \bar{\mu}_{2}\left(B\left(y_{2}, r\right)\right)\right\}>0$.
		we prove that $\lambda_{0}^{\prime}:=\lambda_{0}+ \frac{s_{0}\gamma}{8}>\lambda_{0}$ also belongs to $\mathcal{D}$, which obviously leads to a contradiction with the hypothesis that $\lambda_{0}=\sup \mathcal{D}$. We construct sequences $\{T_{\lambda_{0}^{\prime}, n}\}_{n\geq 1},\{\mu_{\lambda_{0}^{\prime}, i}^{n}\}_{n\geq 1}$ and $\{v_{\lambda_{0}^{\prime}, i}^{n}\}_{n\geq 1}, i=1,2$, that satisfy conditions \eqref{s21}-\eqref{s24} with $\lambda$ replaced by $\lambda_{0}^{\prime}$.
		Let $\widehat{\mu}_{n}^{i}(\cdot):=\bar{\mu}_{n, i}\left(\cdot \mid B\left(y_{i}, r\right)\right), i=1,2$, be the measure $\bar{\mu}_{n, i}$ conditioned on the respective balls $B\left(y_{i}, r\right), i=1,2$. That is, if $\bar{\mu}_{n, i}\left(B\left(y_{i}, r\right)\right) \neq 0$, then we let
		\begin{equation}\label{s213}
			\widehat{\mu}_{n}^{i}(\cdot):=\frac{\bar{\mu}_{n, i}\left(\cdot \cap B\left(y_{i}, r\right)\right)}{\bar{\mu}_{n, i}\left(B\left(y_{i}, r\right)\right)} ,
		\end{equation}
		while if $\bar{\mu}_{n, i}\left(B\left(y_{i}, r\right)\right)=0$, we just let $\widehat{\mu}_{n}^{i}(\cdot):=\delta_{y_{i}}$. Also, let $\tilde{\mu}_{n}^{i}(\cdot):=\left(Q_{T}^* \bar{\mu}_{n, i}\right) (\cdot \mid B(z, \sigma))$. From the above definition, it follows that
		\begin{equation}\label{s214}
			Q_{T}^* \mu_{n, i} \geq \frac{s_{0} \gamma}{4} \tilde{\mu}_{n}^{i}+\lambda_{n} Q_{T}^* v_{n, i} ,\ \  \text{for}\ \  n \geq N \ \ \text{and}\ \  i=1,2.
		\end{equation}
		For a detailed explanation of this statement, please refer to (3.25)-(3.27) in \cite{epro}.  since
		\begin{align*}
			\left\|Q_{T_{n}}\left(x_{i},\cdot\right)-\mu_{n, i}\right\|_{\mathrm{TV}}<\gamma_n,
		\end{align*}
		this implies
		\begin{align*}
			\left\|Q_{R, T, T_{n}}\left(x_{i},\cdot\right)-Q_{R, T}^* \mu_{n, i}\right\|_{\mathrm{TV}}<\gamma_n, \ \ \forall R>0.
		\end{align*}
		By \cite[Lemma 2]{epro}, we can choose $R_{n}>T_{n}$ such that
		\begin{equation}\label{s218}
			\left\|Q_{R_{n}, T, T_{n}}\left(x_{i},\cdot\right)-Q_{R_{n}}\left(x_{i},\cdot\right)\right\|_{\mathrm{TV}}<\gamma_n-\left\|Q_{R_{n}, T, T_{n}}\left(x_{i}\right)-Q_{R_{n}, T}^* \mu_{n, i}\right\|_{\mathrm{TV}} .
		\end{equation}
		Now we define
		\begin{equation}\label{s219}
			T_{\lambda_{0}^{\prime}, n}:=R_{n}, \quad \mu_{\lambda_{0}^{\prime}, i}^{n}:=Q_{R_{n}}^* Q_{T}^* \mu_{n, i}, \ \
			v_{\lambda_{0}^{\prime}, i}^{n}:=\dfrac{ Q_{R_{n}}^*\left(\lambda_{n} Q_{T}^* v_{n, i}+\frac{s_{0} \gamma}{4} \tilde{\mu}_{n, i}\right)}{\frac{s_0\gamma}{4}+\lambda_n}
		\end{equation}
		for $i=1,2, n \geq 1$. By virtue of \eqref{s218}, we immediately see that
		$$
		\left\|Q_{T_{\lambda_{0}^{\prime}, n}}\left(x_{i},\cdot\right)-\mu_{\lambda_{0}^{\prime}, i}^{n}\right\|_{\mathrm{TV}}<\gamma_n \quad \forall n \geq 1.
		$$
		Furthermore, from \eqref{s214}, positivity of $Q_{R_{n}}$ and the definitions of $\lambda_{0}^{\prime}$ and measures $\mu_{\lambda_{0}^{\prime}, i}^{n},\  v_{\lambda_{0}^{\prime}, i}^{n}$, we obtain that
		$$
		\mu_{\lambda_{0}^{\prime}, i}^{n} \geq \lambda_{0}^{\prime} \nu_{\lambda_{0}^{\prime}, i}^{n} \quad \forall n \geq N, i=1,2,
		$$
		when $N$ is chosen sufficiently large. To verify \eqref{s24}, note that from \eqref{s219}, it follows that for all $S \geq 0$
		\begin{align}\label{s221}
			&\left|\int_{E} f(x) Q_{S}^* v_{\lambda_{0}^{\prime}, 1}^{n}(\dif  x)-\int_{E} f(x) Q_{S}^* v_{\lambda_{0}^{\prime}, 2}^{n}(\dif  x)\right| \nonumber \\
			\leq&  \frac{\lambda_{n}}{\left(\lambda_{n}+\frac{s_{0} \gamma}{4}\right)}\left|\int_{E} f(x) Q_{S, R_{n}, T}^* v_{n, 1}(\dif  x)-\int_{E} f(x) Q_{S, R_{n}, T}^* v_{n, 2}(\dif  x)\right|\nonumber \\
			& +\dfrac{\frac{s_{0} \gamma}{4}}{\left(\lambda_{n}+\frac{s_{0} \gamma}{4}\right)} \left| \int_{E} f(x) Q_{S, R_{n}}^* \tilde{\mu}_{n, 1}(\dif  x) \dif  s -\int_{E} f(x) Q_{S, R_{n}}^* \tilde{\mu}_{n, 2}(\dif  x)\right|\nonumber\\
			=:&\frac{\lambda_{n}}{\left(\lambda_{n}+\frac{s_{0} \gamma}{4}\right)}I(S)+\dfrac{\frac{s_{0} \gamma}{4}}{\left(\lambda_{n}+\frac{s_{0} \gamma}{4}\right)}II(S).
		\end{align}
		For $I(S)$, we use \cite[Lemma 2]{epro} and \eqref{s24}, which holds for $v_{n, i}, i=1,2$, we then obtain
		\begin{align*}
			\underset{S \rightarrow \infty}{\limsup } I(S) \leq & \underset{S \rightarrow \infty}{\limsup }\left|\int_{E} f(x) Q_{S, R_{n}, T}^* v_{n, 1}(\dif  x)-\int_{E} f(x) Q_{S}^* v_{n, 1}(\dif  x)\right| \\
			& +\limsup _{S \rightarrow \infty}\left|\int_{E} f(x) Q_{S}^* v_{n, 1}(\dif  x)-\int_{E} f(x) Q_{S}^* v_{n, 2}(\dif  x)\right| \\
			& +\limsup _{S \rightarrow \infty}\left|\int_{E} f(x) Q_{S, R_{n}, T}^* v_{n, 2}(\dif  x)-\int_{E} f(x) Q_{S}^* v_{n, 2}(\dif  x)\right|<\epsilon
		\end{align*}
		On the other hand, we note that $\operatorname{supp} \tilde{\mu}_{n}^{i} \subset B(z, \sigma), i=1,2$,  by using \eqref{s27}, \eqref{ecQ} and dominated convergence theorem
		\begin{align*}
			\limsup_{S\rightarrow\infty}    II(S)=\limsup_{S \rightarrow \infty}\left|  \int_{E} \int_{E}\left(Q_{S, R_n} f(x)-Q_{S, R_n} f\left(x^{\prime}\right)\right)  \tilde{\mu}_{n, 1}(\dif  x) \tilde{\mu}_{n, 2}\left(\dif  x^{\prime}\right) \right| =0.
		\end{align*}
		This implies \eqref{s24} holds for $v_{\lambda_{0}^{\prime}, i}^{n}, i=1,2$. Thus, we have shown that $\lambda_0<\lambda_{0}^{\prime} \in \mathcal{D}$. This contradicts $\lambda_{0}=\sup \mathcal{D}$. Therefore, we conclude that $\sup \mathcal{D}=1$.\qed\bigskip\end{proof}
	
	\section*{Acknowledgment}
The authors are sincerely grateful to  Dr. Ziyu Liu for pointing out the reference~\cite{Gong3} to us. This work was supported by NNSF of China (No. 11971186).

\end{document}